\documentclass[11pt,english]{article}
\usepackage{latexsym}
\usepackage{amscd}
\usepackage{amsthm}
\usepackage[utf8]{inputenc}
\usepackage{a4wide}
\usepackage{amsfonts}
\usepackage{color}
\usepackage{amssymb}
\usepackage{graphicx}
\usepackage{babel}
\usepackage{color}

\usepackage{hyperref}
\usepackage{multicol}
\usepackage[toc,page]{appendix}
\usepackage{amsmath}
\usepackage{tikz}
\usepackage{graphicx}
\usetikzlibrary{matrix}

\usepackage{mathrsfs}
\usepackage{dsfont}
\usepackage{amsthm}
\usepackage{amsbsy}
\usepackage{textcomp}
\usepackage{euscript}

\usepackage{authblk}

\usepackage{libertine}
\usepackage[T1]{fontenc}
\sloppy

%\setlength{\topmargin}{1in}
% \setlength{\headheight}{0in}
% \setlength{\textheight}{8.5in}
% % \setlength{\oddsidemargin}{0.55in}
%\setlength{\evensidemargin}{-0.25in}
% \setlength{\textwidth}{5.87in}
% \setlength{\headsep}{-.19in}

\pagestyle{plain}

%\newcommand{\draftspaced}{\singlespaced} %for draft only 
 %for final version

\DeclareMathOperator{\supp}{supp}
\DeclareMathOperator{\diam}{diam}

\newcommand{\br}{\operatorname{br}}
\newcommand{\vol}{\operatorname{vol}}

\def\Xint#1{\mathchoice
{\XXint\displaystyle\textstyle{#1}}%
{\XXint\textstyle\scriptstyle{#1}}%
{\XXint\scriptstyle\scriptscriptstyle{#1}}%
{\XXint\scriptscriptstyle\scriptscriptstyle{#1}}%
\!\int}
\def\XXint#1#2#3{{\setbox0=\hbox{$#1{#2#3}{\int}$ }
\vcenter{\hbox{$#2#3$ }}\kern-.6\wd0}}

\def\dashint{\Xint-}

\theoremstyle{plain}\newtheorem{Th}{Theorem}
\theoremstyle{plain}
\newtheorem{Def}[Th]{Definition}

\newtheorem{Rem}[Th]{Remark}

\theoremstyle{plain}
\theoremstyle{plain}
%theoremstyle{plain}\newtheorem{T2}[T1]{�������}
\theoremstyle{plain}\newtheorem{Le}[Th]{Lemma}

\renewcommand{\leq}{\leqslant}
\renewcommand{\geq}{\geqslant}

\definecolor{chn}{rgb}{0, 0, 0}

\newcommand{\Rr}{\mathbb{R}}
\newcommand{\Tt}{\mathbb{T}}

\newcommand{\Zz}{\mathbb{Z}}
\newcommand{\Nn}{\mathbb{N}}

\date{}

\title{Trace operator on von Koch's snowflake}
\author[1,2]{Krystian Kazaniecki$^{\dagger,}$}
\author[3]{Micha{\l} Wojciechowski}
\affil[1]{Institute of Analysis, Johannes Kepler University Linz}
\affil[2]{Institute of Mathematics, University of Warsaw}
\affil[3]{Institute of Mathematics, Polish Academy of Sciences}
\begin{document}
\maketitle
\begin{abstract}
We study properties of the boundary trace operator on the Sobolev space $W^1_1(\Omega)$. Using the density result by Koskela and Zhang \cite{MR3519964}, we define a surjective operator \mbox{$Tr: W^1_1(\Omega_K)\rightarrow X(\Omega_K)$}, where $\Omega_K$ is von Koch's snowflake and $X(\Omega_K)$ is a trace space with the quotient norm. Since $\Omega_K$ is a uniform domain whose boundary is Ahlfors-regular with an exponent strictly bigger than one, it was shown by L. Mal\'y \cite{Malysb} that there exists a right inverse to $Tr$, i.e. a linear operator $S: X(\Omega_K) \rightarrow W^1_1(\Omega_K)$ such that $Tr \circ S= Id_{X(\Omega_K)}$. In this paper we provide a different, purely combinatorial proof based on geometrical structure of von Koch's snowflake. Moreover we identify the isomorphism class of the trace space as $\ell_1$. As an additional consequence of our approach we obtain a simple proof of the Peetre's theorem \cite{MR0552011} about non-existence of the right inverse for domain $\Omega$ with regular boundary, which explains Banach space geometry cause for this phenomenon.
\end{abstract}

\let\thefootnote\relax\footnotetext{\!\!\!\!\!\!\!\!\!\!\textbf{$\dagger$} Corresponding author\\
\textbf{Keywords}: Sobolev Spaces, Trace operator, von Koch's Snowflake\\
\textbf{ MSC 2020} : 46E35, 46B25, 46B03, 46B45}

It was shown by Gagliardo (\cite{MR0102739}) that the trace operator maps the space $W^{1}_{1}(\Omega)$ onto $L^1(\partial \Omega)$ for domains with Lipschitz boundary. From this theorem immediately arises a question whether there exists a right inverse operator to the trace, i.e. a continuous, linear operator $S : L^1(\partial \Omega) \rightarrow W^{1}_{1}(\Omega)$ such that $Tr\circ S= Id$. It turns out that in general such operator does not exist. This was proved by Peetre (\cite{MR0552011}). In his paper he has shown the non-existence of right inverse to the trace operator for a half plane. From that by straightening out the boundary one can deduce the non-existence of the right inverse for $\Omega$ with a Lipschitz boundary. More recent proofs can be found in \cite{MR1979184}, \cite{MR2882877}. Note that the trace was studied also in the context of spaces given by more general differential constraints \cite{Gmeineder2019}. In this article we present an exceptionally simple proof based on geometry of a Whitney covering and basic properties of classical Banach spaces.
\begin{Th}\label{Skladak}
Let $\Omega$ be an open domain with Lipschitz boundary and $\partial\Omega$ be a Jordan curve. Let\; $Tr:W^{1}_{1}(\Omega )\rightarrow L^1(\partial\Omega)$ be a trace operator. Then there is no continuous, linear operator  $S:L^1(\partial\Omega) \rightarrow W^{1}_{1}(\Omega)$ such that $Tr\circ S= Id_{L^1(\partial\Omega)}$. 
\end{Th}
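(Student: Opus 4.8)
The plan is to argue by contradiction: assuming a bounded linear right inverse $S$ exists, I will post-compose it with an operator built from a Whitney covering that lands in $\ell^{1}$, and show that $S$ then forces an isomorphic embedding of $L^{1}(\partial\Omega)$ into $\ell^{1}$. Since $\partial\Omega$ is a Jordan curve, the arclength measure is non-atomic and separable, so $L^{1}(\partial\Omega)\cong L^{1}[0,1]$; and it is a classical fact that $L^{1}[0,1]$ does not embed isomorphically into $\ell^{1}$ (for instance $\ell^{1}$ has the Schur property, whereas the Rademacher functions are weakly null but not norm null in $L^{1}[0,1]$). This contradiction is the whole proof, so the work is in producing the embedding.

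First I would fix a Whitney decomposition $\{Q\}$ of $\Omega$ into essentially disjoint cubes with $\operatorname{diam} Q \approx \operatorname{dist}(Q,\partial\Omega)$ and bounded overlap of the dilates $2Q$. After flattening the Lipschitz boundary locally so that the inner normal direction $\nu$ is comparable to a coordinate direction, I define $R:W^{1}_{1}(\Omega)\to \ell^{1}$ by $Rf=\bigl(\int_{Q}\partial_{\nu}f\bigr)_{Q}$, indexed over the boundary-adjacent cubes. Boundedness is immediate and is exactly where the geometry enters: by bounded overlap, $\|Rf\|_{\ell^{1}}=\sum_{Q}\bigl|\int_{Q}\partial_{\nu}f\bigr|\le \sum_{Q}\int_{Q}|\nabla f|\le C\int_{\Omega}|\nabla f|\le C\|f\|_{W^{1}_{1}}$. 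Note that $R$ records only one scalar per cube, so its target really is $\ell^{1}$ and not an $L^{1}$-sum --- this is the point that distinguishes the situation from the trivial isometric inclusion $f\mapsto(f,\nabla f)$ of $W^{1}_{1}(\Omega)$ into an $L^{1}$-space.

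The substance is the matching lower bound $\|R(Sg)\|_{\ell^{1}}\gtrsim\|g\|_{L^{1}(\partial\Omega)}$. Group the cubes into normal towers sitting over the arcs $I$ of a dyadic partition $\mathcal P$ of $\partial\Omega$. By the fundamental theorem of calculus the fluxes telescope along a tower, $\sum_{Q\in\mathrm{tower}(I)}\int_{Q}\partial_{\nu}f = m_{I}(f)-\int_{I}Tr f$, where $m_{I}(f)$ is an interior average term. Hence, for every partition $\mathcal P$, $\|Rf\|_{\ell^{1}}\ge\sum_{I\in\mathcal P}\bigl|\int_{I}Tr f - m_{I}(f)\bigr|$, and taking $f=Sg$ with $Tr(Sg)=g$ and refining $\mathcal P$ recovers $\sup_{\mathcal P}\sum_{I}\bigl|\int_{I}g\bigr|=\|g\|_{L^{1}(\partial\Omega)}$ modulo the remainder coming from $m_{I}$. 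Combined with $\|R(Sg)\|_{\ell^{1}}\le\|R\|\,\|S\|\,\|g\|_{L^{1}}$ this exhibits $R\circ S$ as the desired isomorphic embedding of $L^{1}(\partial\Omega)$ into $\ell^{1}$, completing the contradiction.

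The main obstacle is controlling the interior remainder $m_{I}(f)$ in the lower bound, since a naive telescoping leaves a term that is itself $L^{1}$-sized rather than negligible. I would handle it by truncating every tower at a fixed scale, so that $m_{I}$ becomes the average of $f$ over a curve at fixed distance from $\partial\Omega$; the resulting fixed-scale operator is of a strictly simpler type and can be neutralised either by subtracting it off or, more robustly, by running the whole argument on a Rademacher-type test sequence $g_{n}$ in $L^{1}(\partial\Omega)$, which is weakly null but not norm null, and checking that the remainder does not survive in the limit while the main term does --- so that $R(Sg_{n})$ is weakly null but not norm null in $\ell^{1}$, directly contradicting the Schur property. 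Making this quantitative, together with the local flattening that legitimises replacing $\partial_{\nu}$ by a coordinate derivative on each cube, is the only genuinely technical part; the Banach-space input itself is entirely elementary.
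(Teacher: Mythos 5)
Your Banach--space skeleton is exactly the one the paper uses (contradiction; non-atomicity gives $L^1(\partial\Omega)\cong L^1(\mathbb{T})$; $L^1$ does not embed into $\ell^1$ by Khintchine/Rademacher or the Schur property), and landing in $\ell^1$ through a Whitney decomposition is also the paper's idea. The gap is the step you yourself flag as "the substance": the lower bound $\|R(Sg)\|_{\ell^1}\gtrsim\|g\|_{L^1(\partial\Omega)}$, and the obstacle is worse than a technical remainder. Whitney geometry forces a tower over an arc $I$ to terminate at depth comparable to $|I|$ (a cube at depth $\delta$ has diameter $\approx\delta$, so it cannot sit over an arc of length $\ll\delta$), hence $m_I(f)$ is the integral of $f$ over an interior curve at depth $\approx|I|$. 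But for $f\in W^1_1(\Omega)$ the traces on these interior curves converge in $L^1$ to the boundary trace as the partition refines; that is, $\sum_{I\in\mathcal P}\bigl|m_I(f)-\int_I Tr f\bigr|\to 0$. So the telescoped quantity $\sum_I\bigl|\int_I Tr f-m_I(f)\bigr|$ degenerates to $0$, not to $\|g\|_{L^1}$: the fluxes only record the transition between interior averages and boundary values, never the boundary values themselves. Neither of your fixes repairs this. Fixed-scale truncation is incompatible with the Whitney structure for the reason above, and the Rademacher/Schur variant needs the very same lower bound along the test sequence $g_n$: there the remainder $m_I(Sg_n)$ is of unit size, and nothing prevents it from matching $\int_I g_n$ arc by arc for the particular (unknown, arbitrary) $S$ you assumed --- indeed an extension that transitions to its interior profile at scale $2^{-n}$ does exactly that at $O(1)$ cost in $W^1_1$.

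The paper's proof shows what structural input replaces your hand-made lower bound. Instead of recording fluxes of $f$, it records the Whitney averages of $f$: one writes $BV(\Omega)=BV_{\mathcal A,0}\oplus BV_G$, where $BV_G$ consists of functions constant on each Whitney polygon and $BV_{\mathcal A,0}$ of functions with zero mean on every polygon, with a bounded projection $P:BV(\Omega)\to BV_G$ (a nontrivial cited fact). The trace, extended to $BV(\Omega)$, vanishes on $BV_{\mathcal A,0}$, so $Tr\circ P\circ S=Tr\circ S=Id_{L^1(\partial\Omega)}$. The lower bound you are missing is then automatic from this left-inverse structure: $\|g\|_{L^1}=\|Tr(PSg)\|_{L^1}\leq\|Tr\|\,\|PSg\|_{BV_G}$, so $P\circ S$ is an isomorphic embedding of $L^1(\partial\Omega)$ into $BV_G$, and $BV_G$ embeds into $\ell^1(V)\oplus\ell^1(E)\cong\ell^1$ directly from its norm formula $\sum_A|f_A|\operatorname{vol}_2(A)+\sum_{\{A,B\}}|f_A-f_B|\,l(\partial A\cap\partial B)$. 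In short: the boundedness and trace-compatibility of the projection onto locally constant Whitney functions is the key lemma that makes the $\ell^1$ embedding bounded below, and it is precisely what telescoping fluxes cannot supply; without it (or some substitute) your argument does not close.
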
 
Actually the proof of Theorem \ref{Skladak} presented here works with weaker assumptions. Whenever 
\begin{enumerate}
    \item one can define reasonable trace operator i.e. functions which are continuous up to the boundary are dense in $W^1_1(\Omega)$ and the Trace is just the restriction operator on such functions; when $\partial\Omega$ is a Jordan curve this is provided by Koskela, Zhang theorem ( \cite{MR3519964}),
    \item trace space $X$ contains isomorphic copy of $L^1$.
\end{enumerate}

In \cite{MR1428124} Hajlasz and Martio studied the existence of a right inverse to \textcolor{chn}{the} trace operator in the case of Sobolev spaces $W^p_1(\Omega)$ for $p>1$ and they characterize trace space as a generalized Sobolev space. In paper \cite{bonk2018} the trace of Haj{\l}asz-Sobolev spaces to porous Ahlfors regular closed subspace is studied for sufficiently large exponent. 

In the case of $p=1$ the behavior of the trace space ( the smallest Banach space for which the trace operator is bounded)  changes dramatically for the domains with fractal boundary.  L. Mal\'y \cite{Malysb} characterized the trace spaces for Ahlfors regular, uniform domains in terms of Besov spaces on fractal sets and he constructed the linear extension operator from the trace space. %Petree's theorem establishes non-existence of such operator for domains with Lipschitz boundary. 

The second goal of this paper is to study the trace operator on the Sobolev space on von Koch's snowflake $\Omega_K$. This is a very particular domain and the proofs provided in this article depend heavily on its combinatorial structure. However they are quite different than Mal\'y approach and does not use Besov norm at all. The Besov space obtained by Mal\'y as a trace space in the case of von Koch's snowflake turns out to be isomorphic to $\ell_1$. Indeed, it follows from Theorem 1 and Theorem 3 of Chapter VI of \cite{jonsson} that the trace space is complemented subspace of $B^1_{1,1}(\mathbb{R}^2)$. By Proposition 7  p. 200 of \cite{Meyer} the space $B^1_{1,1}(\mathbb{R}^2)$  is isomorphic to $\ell_1$. By Pe{\l}czy\'nski's theorem complemented subspaces of $\ell_1$ are isomorphic to $\ell_1$ \cite{pelcz}.
Our proof identifies the trace space directly as an Arens-Eells space (otherwise known as Lipschitz free space) for a suitable metric on the boundary by a combinatorial argument. It follows by an old Ciesielski's argument  \cite{MR0132389} that this space is isomorphic to $\ell_1$. The existence of a right inverse operator is just a property of $\ell_1$. In the proof we use the structure of a specific Whitney covering of $\Omega_K$ described in the Appendix, which is quite interesting in itself. To summarize:
\begin{Th}\label{twierdzenieKocha}
Let  $Tr: W^{1}_{1}(\Omega_K) \rightarrow X(\Omega_K)$ be a trace operator, where $X(\Omega_K)$ is a trace space \textcolor{chn}{defined by} \eqref{przslad}. Then $X(\Omega_K)\simeq \ell_1$  and there exists a continuous, linear operator \mbox{$S:X(\Omega_K)\rightarrow W^1_1(\Omega_K)$} such that $Tr\circ S=Id_{X(\Omega_K)}$. 
\end{Th}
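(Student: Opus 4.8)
The plan is to exploit the identification of the trace space $X(\Omega_K)$ with an Arens--Eells (Lipschitz free) space $\mathcal{F}(M)$ over a suitable pointed metric space $(M,\rho)$, established in the preceding section, together with the universal linearization property of free spaces. Recall that for any Banach space $Y$ and any Lipschitz map $\Phi:M\to Y$ with $\Phi(0)=0$ there is a unique bounded linear operator $\hat\Phi:\mathcal{F}(M)\to Y$ satisfying $\hat\Phi\circ\delta_M=\Phi$ and $\|\hat\Phi\|=\mathrm{Lip}(\Phi)$, where $\delta_M:M\to\mathcal{F}(M)$ is the canonical isometric embedding. This reduces the construction of the \emph{linear} operator $S$ to the construction of a single \emph{nonlinear} Lipschitz map into $W^1_1(\Omega_K)$ together with one norm estimate.

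Concretely, I would first construct a map $\Phi:M\to W^1_1(\Omega_K)$ that is a pointwise section of the trace, i.e.\ $Tr\circ\Phi=\delta_M$. The natural candidates for $\Phi(x)$ come from the self-similar Whitney decomposition of $\Omega_K$: to each generator $x\in M$ one associates the subregion $\Omega_x\subset\Omega_K$ cut out by the corresponding node of the self-similar tree, together with a mollification of the characteristic function $\chi_{\Omega_x}$ across the Whitney cubes adjacent to $\partial\Omega_K$. This is precisely where the $BV$ extension of the trace developed earlier is the right tool, since $\chi_{\Omega_x}\in BV(\Omega_K)$ and its boundary trace — an indicator of the corresponding boundary arc, which is exactly the element $\delta_x\in\mathcal{F}(M)=X(\Omega_K)$ — is computed in $BV$; the mollification is then arranged to smooth only the interior cut, leaving the boundary values, and hence the trace, unchanged, while landing in $W^1_1(\Omega_K)$. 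The base point is fixed by taking $\Omega_0=\emptyset$, so $\Phi(0)=0$.

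Granting such a $\Phi$, I would set $S:=\hat\Phi$. The identity $Tr\circ S=Id_{X(\Omega_K)}$ then follows formally: $Tr\circ S$ is a bounded linear endomorphism of $\mathcal{F}(M)$ that agrees with the identity on $\delta_M(M)$, because $Tr\circ\Phi=\delta_M$; since $\delta_M(M)$ spans a dense subspace, the uniqueness clause of the universal property forces $Tr\circ S=Id$. Thus the whole theorem rests on the single quantitative fact that $\Phi$ is Lipschitz.

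The hard part will be exactly this Lipschitz estimate, $\|\Phi(x)-\Phi(y)\|_{W^1_1}\le C\,\rho(x,y)$, and it is where the fractal geometry must defeat the obstruction of Theorem~\ref{Skladak}. The difference $\Phi(x)-\Phi(y)$ is (a mollification of) the characteristic function of the region subtended by the boundary arc between $x$ and $y$, so its $W^1_1$ norm is controlled by the $L^1$ mass of the gradient concentrated on the Whitney cubes lining the interior cut, summed over the generations of the self-similar tree separating $x$ from $y$. I expect this to reduce to a geometric series in the contraction ratio of the von Koch construction, whose convergence against $\rho(x,y)$ is guaranteed precisely because the snowflake metric inflates boundary distances by the dimensional exponent — the mechanism that is absent for smooth boundaries and that underlies Peetre's non-existence result. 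Using the comparability of Whitney cube sidelengths to their distance from $\partial\Omega_K$ to bound each term, summing the series should yield the uniform Lipschitz constant, hence $\|S\|=\mathrm{Lip}(\Phi)<\infty$.
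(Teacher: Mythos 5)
Your structural reduction is valid, and it is genuinely different from the paper's route. Granting the identification $\dot{X}(\Omega_K)\cong AE(\tilde d)$ from the preceding section, the universal property of the free space does reduce everything to producing one Lipschitz map $\Phi:\partial\Omega_K\to \dot{BV}(\Omega_K)$ with $Tr\bigl(\Phi(x)-\Phi(y)\bigr)=\mathds{1}_{[x,y]}$, after which composition with the mollification operator of Lemma \ref{liniowBV} lands in $W^1_1(\Omega_K)$ and density of molecules gives $Tr\circ S=Id$. The paper instead proves $AE(\tilde d)\cong\ell^1$ (via Ciesielski's theorem and Weaver's results on snowflaked metrics) and then lifts an $\ell^1$-basis element by element; your route would avoid that theorem entirely, which is an attractive feature.

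The gap is in the Lipschitz estimate, and it is not where you locate it. Fix a basepoint $y_0$ and let $\Omega_x$ be the union of Whitney cells whose cylinders lie in the oriented arc $[y_0,x]$, so that $\Phi(x)$ is (a mollification of) $\chi_{\Omega_x}$. The metric of the identification is $\tilde d(x,y)=\|\mathds{1}_{[x,y]}\|_{\dot X(\Omega_K)}$, and because the indicators of the two arcs joining $x$ to $y$ differ by a constant, $\tilde d(x,y)$ is comparable to the \emph{minimum} of the $d_K$-lengths of the two arcs. Now take $x,y$ on opposite sides of $y_0$, each at parameter distance $\varepsilon$ from $y_0$: then $\tilde d(x,y)\lesssim \varepsilon^{\log_4 3}\to 0$, while $\Phi(x)-\Phi(y)$ is forced by coherence to be the indicator of the region subtended by the \emph{long} arc from $x$ to $y$, whose total variation is comparable to the $d_K$-length of that arc and hence bounded below by an absolute constant (that arc contains first-generation cylinders). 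So $\operatorname{Lip}(\Phi)=\infty$; the geometric series you describe controls only pairs whose short arc avoids the basepoint. This is a topological obstruction (wrap-around on a closed Jordan curve), not a fractal one: any family of the form ``region cut out by an arc from a fixed basepoint'' has a seam, and moving the seam only relocates the failure. (A secondary point: even for good pairs, your estimate needs $d_K\lesssim\tilde d$, i.e.\ near-optimality of the canonical lifts, which is exactly the content of Lemma \ref{ucieciewnieskon} and \eqref{chichi}; this is legitimately available to you, but it is the hard lower bound, not a soft series computation.) The paper sidesteps the seam problem precisely because its lifts $\gamma^A$ are indexed by tree vertices rather than by boundary points, so no coherence between them is needed, and boundedness of $S$ comes from the $\ell^1$-structure of the trace space instead of a Lipschitz estimate. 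To rescue your approach you would need an additional ingredient --- for instance a bounded linear splitting of $AE(\tilde d)$ over the closed curve into free spaces over two sub-arcs, on each of which your nested-region family is indeed Lipschitz --- and producing such a splitting is essentially equivalent to the $\ell^1$-type structure the paper establishes.
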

    We want to stress that the novelty of this article lies in proposing new combinatorial method of proof rather than the result itself, which is already known. To keep the clarity of the presentation we decided to focus on the von Koch's snowflake. Our method could be applied to more general class of domains (some of them could be obtained e.g. by Carleson's construction \cite{carlfrac}). However we do not know how wide the actual range of possible applications of this approach is. 
    
    In the following section we define the trace operator, trace space and auxiliary properties \textcolor{chn}{of} $BV(\Omega)$ needed in the proof.

    \vspace{5mm}

%Not All Traces On the Circle Come From Functions of Least Gradient in the Disk

\section{Properties of $BV(\Omega)$ and trace operator}
From now on we assume that $\Omega\subset \Rr^2$, $\partial \Omega$ is a Jordan curve. Our approach to Theorem \ref{Skladak} up to technical differences works in higher dimensions. However in the proof of the Theorem \ref{twierdzenieKocha} the properties of two dimensional euclidean space are crucial. We define the trace operator and the trace space for $W^{1}_1(\Omega)$. Let us recall a notion of (slightly generalized) Whitney covering of $\Omega$.  
\begin{Def}\label{defwhit} We call the family of polygons $\mathcal{A}$ a Whitney decomposition of an open set\; $\Omega\subset \Rr^2$ if it satisfies:
\begin{enumerate}
\item\label{combilip} For $A\in\mathcal{A}$ the boundaries $\partial A$ are uniformly bi-lipschitz \textcolor{chn}{images of $\mathbb{S}^1$.}
\item $\bigcup_{Q\in\mathcal{A}} Q= \Omega$ and elements of $\mathcal{A}$ have pairwise disjoint interiors.
\item \label{compdistvol} $C^{-1}  \vol_2 A\leq dist(A, \partial \Omega)^n\leq C \vol_2 (A).$
\item \label{comneigh} If $\partial A\cap\partial B $ ha a positive one dimensional Hausdorff measure then
\begin{enumerate}
\item $C^{-1} \leq \frac{\vol_2(A)}{\vol_2(B)}\leq C$. 
\item $C^{-1} \leq \frac{l(\partial A)}{l(\partial B)}\leq C.$
\item $C^{-1}l(\partial A)\leq l(\partial A\cap \partial B)\leq C^{-1}l(\partial A)$, 
\end{enumerate}
 where $l(\cdot)$ denotes length of a curve, and $\vol_2$ denotes the area of the polygon.
\item For a given polygon $A\in\mathcal{A}$ there exists at most N polygons $B\in\mathcal{A}$ s.t. $\partial A\cap\partial B\neq \emptyset$.
\end{enumerate} 
For the purpose of this article we will also assume that polygons of $\mathcal{A}$ are uniformly star shaped in the following sense
\begin{enumerate}
\item[6.] For every $A\in\mathcal{A}$ there exists a point $x\in A$ and positive numbers $\lambda$, $\tau$ s.t. $B(x,\lambda)\subset A\subset B(x,\tau)$, where $\frac{\lambda}{\tau}$ is fixed and the polygon $A$ is star shaped with respect to $x$. We call such point a center of $A$.
\end{enumerate} 
\end{Def}
Let $\mathcal{A}$ be such covering then we can define a graph describing it's geometry.
\begin{Def}
Let $\mathcal{A}$ be a Whitney decomposition. We call a graph $G:=G(\mathcal{A})=(V(\mathcal{A}),E(\mathcal{A}))=:(V,E)$ a graph of $\mathcal{A}$ if\; $V:=\mathcal{A}$ and $\{A,B\}\in E$ only if boundaries of $A$ and $B$ have intersection of positive one dimensional Hausdorff measure.
\end{Def}
We denote by $BV(\Omega)$ a space of measures of bounded variation i.e.
\[
BV(\Omega)=\{f\in L^1: \nabla f \in M(\Omega,\mathbb{R}^2)-\mbox{space of vector valued bounded borel measures}\}
\] 
with a norm 
\[
\|f\|_{BV(\Omega)}=\|f\|_{L^1(\Omega)}+\|\nabla f\|_{M(\Omega, \mathbb{R}^2)}
\]
We introduce \textcolor{chn}{some} special subspaces of $BV(\Omega)$.
\begin{Def}
Let $\mathcal{A}$ be a Whitney decomposition of\; $\Omega$. We define the following subspaces of $BV(\Omega)$
\[
BV_{\mathcal{A},0}= \{F\in BV(\Omega): \forall \,A\in\mathcal{A}\;\; \int_{A} F(x)dx=0\}
\]
and
\[
BV_{G}=\{f \in BV(\Omega): \forall\, A\in \mathcal{A}\quad f|_{A}=f_{A}\in \Rr\}
\]
\end{Def}
It is a known fact that for a given Whitney decomposition the space $BV_{\mathcal{A},0}$ is a complemented subspace of $BV(\Omega)$. A proof of this fact can be found in (\cite{RW},\cite{Derezin}). 
\begin{Le}
For any domain $\Omega$: 
\[ BV(\Omega) = BV_{\mathcal{A},0} \oplus BV_{G}.
\]
\end{Le}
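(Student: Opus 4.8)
The plan is to exhibit an explicit bounded projection $P:BV(\Omega)\to BV_{G}$ whose kernel is exactly $BV_{\mathcal{A},0}$; the decomposition then follows formally. For $f\in BV(\Omega)$ and $A\in\mathcal{A}$ set $f_{A}=\frac{1}{\vol_2(A)}\int_{A}f(x)\,dx$ and define $Pf=\sum_{A\in\mathcal{A}}f_{A}\mathbf{1}_{A}$. Several facts are then immediate. First, $\sum_{A}|f_{A}|\vol_2(A)\le\|f\|_{L^1(\Omega)}$, so $Pf\in L^{1}(\Omega)$ with $\|Pf\|_{L^1}\le\|f\|_{L^1}$. Second, $P$ fixes its range: if $g\in BV_{G}$ is constant equal to $g_{A}$ on each $A$, then $g_{A}=\frac{1}{\vol_2(A)}\int_A g$, so $Pg=g$; hence $P^{2}=P$ and $\operatorname{Im}P=BV_{G}$ (once we know $Pf\in BV$). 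Third, $(I-P)f$ satisfies $\int_{A}(f-Pf)=\int_A f-f_A\vol_2(A)=0$ for every $A$, so $\ker P=BV_{\mathcal{A},0}$. Finally the sum is direct: if $f\in BV_{G}\cap BV_{\mathcal{A},0}$ then $f$ is constant $f_{A}$ on $A$ and $f_{A}\vol_2(A)=\int_A f=0$, forcing $f_A=0$ for all $A$, hence $f=0$.

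The one substantial point is to show $Pf\in BV(\Omega)$ with $\|\nabla(Pf)\|_{M}\le C\|\nabla f\|_{M}$. Since $Pf$ is piecewise constant on the polygons $A$, whose boundaries are rectifiable, its distributional gradient is the jump measure concentrated on the interfaces, so that
\[
\|\nabla(Pf)\|_{M}=\sum_{\{A,B\}\in E}|f_{A}-f_{B}|\,l(\partial A\cap\partial B).
\]
Thus the problem reduces to estimating $|f_{A}-f_{B}|$ for adjacent polygons. I would write $|f_{A}-f_{B}|\le|f_{A}-f_{A\cup B}|+|f_{B}-f_{A\cup B}|$ and bound each term by a Poincaré inequality on the connected set $A\cup B$: using $|f_{A}-f_{A\cup B}|\le\frac{1}{\vol_2(A)}\int_{A\cup B}|f-f_{A\cup B}|$ together with $\int_{A\cup B}|f-f_{A\cup B}|\le C\,\diam(A\cup B)\,\|\nabla f\|_{M(A\cup B)}$, and the Whitney comparabilities $\vol_2(A)\sim\vol_2(B)$, $\diam(A\cup B)\sim\sqrt{\vol_2(A)}\sim l(\partial A)\ge l(\partial A\cap\partial B)$, one obtains $|f_{A}-f_{B}|\,l(\partial A\cap\partial B)\le C\,\|\nabla f\|_{M(A\cup B)}$ with $C$ independent of the pair. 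Summing over edges and invoking the bounded–overlap property (each $A$ has at most $N$ neighbours, so each point of $\Omega$ is counted boundedly often among the sets $A\cup B$) gives $\|\nabla(Pf)\|_{M}\le C(N+1)\|\nabla f\|_{M}$, as required.

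The main obstacle is the uniformity of the Poincaré constant on the unions $A\cup B$: the estimate must hold with a single constant $C$ for all adjacent pairs and all scales. This is precisely where properties 1 and 6 are used — each polygon is uniformly bi-Lipschitz and uniformly star-shaped with $\lambda/\tau$ fixed, so after rescaling $A\cup B$ to unit diameter the family of admissible shapes is uniformly controlled (in particular $A\cup B$ is a John domain with uniform constants), yielding a scale- and pair-independent Poincaré inequality; the comparability of area, perimeter and interface length in property 4 then converts this into the displayed bound. Once $P$ is shown bounded, $BV(\Omega)=\ker P\oplus\operatorname{Im}P=BV_{\mathcal{A},0}\oplus BV_{G}$, which is the assertion; this also re-proves concretely that $BV_{\mathcal{A},0}$ is complemented, with $BV_{G}$ an explicit complement.
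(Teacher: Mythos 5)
Your proposal is correct, and it necessarily takes a different route from the paper, because the paper gives no proof of this lemma at all: the statement is presented as a known fact with a pointer to the literature (the references cited just before the lemma, including the unpublished Derezi\'nski--Nazarov--Wojciechowski manuscript), and no argument appears in the text. What you write out is the explicit standard construction: the averaging projection $Pf=\sum_{A\in\mathcal{A}}f_A\mathds{1}_A$, which fixes $BV_G$, has kernel exactly $BV_{\mathcal{A},0}$, and is bounded on $BV(\Omega)$; this is in fact the very projection $P\colon BV(\Omega)\to BV_G$ that the paper invokes later without construction (in the definition of $\widetilde{Tr}$ and in the proof of Theorem \ref{Skladak}), and your estimate also explains the norm formula for $BV_G$ displayed right after the lemma, so your argument supplies exactly what the citation leaves implicit. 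Two remarks on the analytic core. First, you only need the inequality $\|\nabla(Pf)\|_{M}\le\sum_{\{A,B\}\in E}|f_A-f_B|\,l(\partial A\cap\partial B)$ rather than equality (equality holds, but verifying it takes a separate, routine argument using local finiteness of the covering); the convergence of this sum is precisely what your edge-by-edge Poincar\'e estimate combined with the bounded-overlap property delivers, so the logic is sound. Second, the crux you identify -- a pair- and scale-independent Poincar\'e constant on $A\cup B$ -- indeed rests on the hypotheses you name, but the division of labour should be stated more carefully: star-shapedness with respect to a single point together with $B(x,\lambda)\subset A\subset B(x,\tau)$ (property 6) does \emph{not} by itself give uniform John constants (arbitrarily thin radial spikes are compatible with it); it is the uniformly bi-Lipschitz, i.e.\ chord-arc, boundaries of property 1 that exclude degenerating necks, and the lower bound $l(\partial A\cap\partial B)\geq C^{-1}l(\partial A)$ in property 4 that guarantees the two polygons are glued along an interface comparable to their perimeters, so that after rescaling to unit size the unions $A\cup B$ form a uniformly controlled family of John domains on which the $BV$ Poincar\'e inequality holds with a single constant. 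With these two points made explicit, your proof is complete and self-contained, which is more than the paper itself offers for this statement.
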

Let us observe that we can easily calculate the norm of function $f\in BV_{G}$.
\[
\|f\|_{BV_{G}}:=\| f\|_{BV(\Omega)} \textcolor{chn}{=} \sum_{A\in V} |f_A|\;\vol_2(A)+ \sum_{\{A,B\}\in E} |f_{A}-f_{B}|\; l(\partial A\cap \partial B)
\]
\begin{Def}\label{dobredrzewo}
Let \;$\Omega$ be \textcolor{chn}{a} simply connected planar domain with Poincare inequality for $p=1$ and \textcolor{chn}{$\mathcal{A}$} be its Whitney decomposition. We will call a spanning tree $T=(V_T,E_T)$ of the graph $G(\mathcal{A})$ a Whitney tree of $\mathcal{A}$ if it satisfies \textcolor{chn}{the} following conditions: 
\begin{enumerate}
\item for every $f\in BV_{G}(\Omega)$
\begin{equation}\label{normaBVG}
\|f\|_{BV_{G}}\simeq \|f\|_{BV_T}:=\sum_{A\in V_T} |f_A|\; \vol_2(A)+\sum_{\{A,B\}\in E_T} |f_{A}-f_{B}| \;l(\partial A\cap \partial B),
\end{equation}
\item for every point $x$ on the boundary there is \textcolor{chn}{an} infinite branch $\br(x)$ of \;$T$ s.t. $ \br(x)\cong \Zz_{+}$ and $\operatorname{dist}(A_n,x)\rightarrow 0$ as $n\rightarrow \infty$, where $A_n\in\br(x)$. For a sequence of real numbers $\{a_{A_n}\}$ we call a limit $\lim_{n\rightarrow \infty} a_{A_n}$ a limit along the branch $\br(x)$.
\end{enumerate}
\end{Def}
In their unpublished preprint Derezinski, Nazarov, Wojciechowski \cite{DNW} have proven that for any bounded simply connected planar domain there exists a Whitney tree of the graph $G(\mathcal{A})$. However in \textcolor{chn}{the present} paper we will not use this result. An explicit construction of \textcolor{chn}{a} Whitney tree for von Koch's snowflake is given in Section \ref{kochowanie} and in the Appendix. Moreover \textcolor{chn}{the} obtained tree has very regular structure.\\ 

It follows immediately  that $BV_G\cong BV_T$, where $BV_T$ is a set $BV_G$ with the norm \mbox{$\|\cdot\|_{BV_T}$}. 
Using the above notation we define trace of $f\in W^{1}_{1}(\Omega)$. Since $\Omega$ is a domain with a Jordan curve as boundary it follows from \textcolor{chn}{theorem of Koskela and Zhang } ( \cite{MR3519964}, see \cite{Koskela2017} for the case d>2) that restrictions of Lipschitz function $\operatorname{Lip}(\Rr^2)$ are dense in $W^{1}_{1}(\Omega)$. For $f\in C(\overline{\Omega})\cap W^{1}_{1}(\Omega)$ we define the trace operator as a restriction of $f$ to the boundary. We define a trace space $X(\Omega)$ as completion of a space $Tr(C(\overline{\Omega})\cap W^{1}_{1}(\Omega))$ with respect to the norm $\|\cdot\|_X$, where 
\begin{equation}\label{przslad}
\|g\|_{X(\Omega)}:= \inf\{\|f\|_{W^{1}_{1}(\Omega)}: Tr f= g \mbox{ and } f\in C(\overline{\Omega})\cap W^{1}_{1}(\Omega) \}.
\end{equation}
Since Lipschitz functions on $\Omega$ are dense in $W^{1}_{1}(\Omega)$ we can define \textcolor{chn}{the} trace operator on \textcolor{chn}{the} whole space $W^{1}_{1}(\Omega)$. It is obvious that $Tr: W^{1}_{1}(\Omega) \rightarrow X(\Omega)$ is a continuous linear operator and it is surjective. We want to extend the trace operator to $BV(\Omega)$ (see also \cite{Lahti}).
%We will show that trace on $BV_T$ and $W^1_1(\Omega)$ gives the same space.%
\begin{Le}\label{liniowBV}
There exists a continuous, linear operator $\Phi : BV_G \rightarrow W^{1}_{1}(\Omega)$ s.t. for every $A \in \mathcal{A} $
\begin{equation}\label{coscostam}
f_A=\dashint_{A} f(y) dy =\dashint_{A} \Phi(f)(y) dy + o(\operatorname{dist}(A,\partial \Omega)).
\end{equation}
\end{Le}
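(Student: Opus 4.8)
The plan is to build $\Phi$ from a Lipschitz partition of unity adapted to $\mathcal{A}$ and then to repair the cell-averages by small bumps supported deep inside each polygon. Fix dilated polygons $A^{\ast}\supset A$ (the dilate of $A$ about its center from axiom 6 by a fixed factor $1+\epsilon$), chosen so that $\{A^{\ast}\}$ still has the bounded-overlap property 5, and take a partition of unity $\{\varphi_A\}$ with $\operatorname{supp}\varphi_A\subset A^{\ast}$, $0\leq\varphi_A\leq 1$, $\sum_A\varphi_A\equiv 1$ on $\Omega$, and $|\nabla\varphi_A|\lesssim 1/\operatorname{diam}(A)$; this is the standard Whitney construction, and axioms 1--3 guarantee $\operatorname{diam}(A)\simeq\operatorname{dist}(A,\partial\Omega)\simeq\vol_2(A)^{1/2}$. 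Set $\tilde\Phi(f)=\sum_A f_A\varphi_A$. Since the sum is locally finite, $\tilde\Phi(f)$ is locally Lipschitz, so it suffices to control its $L^1$-norm and that of its gradient.

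For the norm estimate I would use $\sum_A\nabla\varphi_A=0$ to write, locally, $\nabla\tilde\Phi(f)=\sum_A (f_A-f_{A_0})\nabla\varphi_A$ for a reference index $A_0$ among the finitely many active cells. On the collar near a shared edge $\partial A\cap\partial B$ the gradient is then $\lesssim |f_A-f_B|/\operatorname{diam}(A)$, while that collar has area $\simeq\operatorname{diam}(A)\,l(\partial A\cap\partial B)$; integrating and summing over edges (axiom 5 bounds the overlaps) yields $\|\nabla\tilde\Phi(f)\|_{L^1}\lesssim\sum_{\{A,B\}\in E}|f_A-f_B|\,l(\partial A\cap\partial B)$. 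Likewise $\|\tilde\Phi(f)\|_{L^1}\lesssim\sum_A|f_A|\,\vol_2(A^{\ast})\simeq\sum_A|f_A|\,\vol_2(A)$. By \eqref{normaBVG} both sums are $\lesssim\|f\|_{BV_G}$, so $\tilde\Phi:BV_G\to W^1_1(\Omega)$ is bounded and linear.

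It remains to fix the averages. Using $\sum_B\varphi_B\equiv 1$ one gets $f_A-\dashint_A\tilde\Phi(f)=\sum_{B\neq A}(f_A-f_B)\dashint_A\varphi_B$, and since $\varphi_B$ meets $A$ only in a collar of area $\lesssim\vol_2(A)$, bounded overlap gives $|f_A-\dashint_A\tilde\Phi(f)|\lesssim\max_{B\sim A}|f_A-f_B|$. To remove the mismatch exactly I would invoke axiom 6 to choose a ball $B(x_A,\lambda)\subset A$ with $\vol_2(B(x_A,\lambda))\simeq\vol_2(A)$ and a smooth bump $\psi_A$ supported there with $\dashint_A\psi_A=1$ and $|\nabla\psi_A|\lesssim 1/\operatorname{diam}(A)$, and set
\[
\Phi(f)=\tilde\Phi(f)+\sum_A\Big(f_A-\dashint_A\tilde\Phi(f)\Big)\psi_A .
\]
Then $\dashint_A\Phi(f)=f_A$ for every $A$, so the error term $o(\operatorname{dist}(A,\partial\Omega))$ in \eqref{coscostam} is in fact $0$. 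Boundedness survives the correction: its gradient on $A$ has $L^1$-norm $\lesssim |f_A-\dashint_A\tilde\Phi(f)|\,\operatorname{diam}(A)\lesssim\max_{B\sim A}|f_A-f_B|\,l(\partial A\cap\partial B)$ by axioms 4b--4c, which again sums to $\lesssim\|f\|_{BV_G}$; the $L^1$ part is estimated the same way, using that $\Omega$ is bounded.

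The main obstacle, and the reason a bare partition of unity does not suffice, is precisely the average condition. For $f\in BV_G$ one only controls the edge quantities $|f_A-f_B|\,l(\partial A\cap\partial B)$ in the tail, which bounds $\max_{B\sim A}|f_A-f_B|$ by no better than $o\!\left(1/\operatorname{dist}(A,\partial\Omega)\right)$, so the uncorrected average of $\tilde\Phi(f)$ deviates from $f_A$ by far more than the required $o(\operatorname{dist}(A,\partial\Omega))$. The essential step is therefore to annihilate this deviation by the interior bumps while proving, through the area/edge-length comparisons of axioms 3 and 4, that the correction costs at most $\|f\|_{BV_G}$; the only genuinely delicate bookkeeping is that the per-polygon gradient cost $|f_A-\dashint_A\tilde\Phi(f)|\,\operatorname{diam}(A)$ is dominated by the edge sum appearing in \eqref{normaBVG}.
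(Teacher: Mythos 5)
Your proposal is correct, but it takes a genuinely different route from the paper. The paper's proof is a one-line construction: $\Phi$ is a mollification at the variable scale $r(x)=c\operatorname{dist}(x,\partial\Omega)^{2}$, namely $\Phi(f)(x)=\int_{\Omega}f(x-t)\,\phi\bigl(t/r(x)\bigr)\,r(x)^{-2}\,dt$. Since $r(x)$ is quadratically small compared with the Whitney cell size (which is $\simeq\operatorname{dist}(A,\partial\Omega)$ by axioms 3 and 6), $\Phi(f)$ coincides with the constant $f_A$ on all of $A$ except a collar of width $\simeq\operatorname{dist}(A,\partial\Omega)^{2}$ along $\partial A$; this single observation simultaneously gives the bound $\|\Phi(f)\|_{W^{1}_{1}}\lesssim\|f\|_{BV_G}$ (the gradient lives on the collars and integrates to the edge sums in \eqref{normaBVG}) and the average property \eqref{coscostam}. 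You instead glue the constants $f_A$ by a partition of unity at the natural Whitney scale $\operatorname{diam}(A)$ and then repair the averages exactly with interior bumps $\psi_A$. Your route is longer but buys something real: $\dashint_A\Phi(f)=f_A$ holds exactly, with no error term at all, which is stronger than the stated $o(\operatorname{dist}(A,\partial\Omega))$ --- and indeed cleaner than what the paper's collar estimate literally yields, because for $f\in BV_G$ the neighbor oscillation $\max_{B\sim A}|f_A-f_B|$ need not tend to zero as $A$ approaches the boundary (only the products $|f_A-f_B|\,l(\partial A\cap\partial B)$ are summable); you correctly identified this as the crux, and the bump correction disposes of it. One detail you should patch: the dilates $A^{\ast}$ meet not only edge-neighbors of $A$ but possibly also cells touching $A$ only at a corner point; such pairs are not edges of $G(\mathcal{A})$, so $|f_A-f_B|$ for them is not a term of \eqref{normaBVG}, and you must chain around the corner through the at most $N$ cells meeting there (axiom 5), all of comparable size with mutual interfaces of comparable length (axiom 4), to dominate it by a bounded sum of genuine edge differences. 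With that routine addition your argument is complete.
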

\begin{proof}
\textcolor{chn}{
We begin with a construction of an operator which smooths out the function from $BV(\Omega)$ inside of a fixed ball $B\subset\Omega$. We formulate it in two-dimensional case, however it is valid in domains of arbitrary dimension with the analogous proof.
\begin{Le}\label{ballsmooth} For every ball $B\subset \Omega\subset \mathbb{R}^2$ there exists a bounded linear operator $T_B : BV(\Omega)\rightarrow BV(\Omega)$ such that $T_B f|_{(\Omega\backslash B)} = f|_{(\Omega\backslash B)}$ and $(T_Bf)|_B \in W^{1}_{1}(B)$ for $f \in BV(\Rr^d)$. Moreover
\[
\|(T_Bf)|_B\|_{W^{1}_{1}(B)}\leq K \|f|_B\|_{BV(B)}
\]
with a constant independent of the radius of a ball. Moreover for $f \in BV(\Rr^d)\cap L^\infty(B)$ we have 
\begin{equation}\label{suponball}
\|(T_Bf)|_B\|_{L^{\infty}(B)}\leq C \|f|_B\|_{L^{\infty}(B)}. 
\end{equation}
\end{Le}
\begin{proof}[Proof of Lemma \ref{ballsmooth}]
\textcolor{chn}{ We will use operator $T_1$ from Proposition 4.2 in \cite{MR1979184}. It is a continuous operator from $BV(\Rr^d_+)$ to $W^1_1(R^d_+)$ satisfying  $Tr T_1 f= Tr f$. This operator is given by a convolution like formula: 
\[
T_1 f (x)=\int_{R^{d}_{+}}  \phi\left(\frac{y-y^{'}}{x_d},\frac{x_d^{'}}{x_d}\right) x_d^{-d} f(y^{'},x_{d}^{'}) dy^{'}dx_{d}^{'}, \qquad\mbox{ for } x=(y,x_d)\in\Rr^{d}_+.
\]
Therefore for $f\in BV(\Rr^d_+)\cap L^{\infty}$ we have
\[
\|T_1 f\|_{L^{\infty}}\leq \|f\|_{L^{\infty}}.
\]
For every $x\in \partial B_0:=B(0,1)$ we have $U_x=B(x,r_x)\cap B_0$ and diffeomorphism $p_x: U_x \rightarrow \Rr^d_+$ such that $p_x(U_x\cap \partial B_0)\subset R^{d-1}\times\{0\}.$ There exists $V_x=B(x,\tilde{r}_x)\cap B_0$ such that for $f\in BV(\Rr^d_+)$ we have 
\[
\supp f\subset p_x(V_x)\Rightarrow \supp T_1f \subset p_X(U_x).
\]
We can choose a finite cover of $B_0$ by $\{V_{x_j}\}_{j=1}^N$ and $B(0,r_0)$ with $r_0<1$. There exists a decomposition of unity corresponding to this covering. We denote it by $\phi_0, \phi_{x_1},\ldots, \phi_{x_N}$. We define our operator by the formula
\[
H f(x)= (\phi_0 f)*\psi (x)+\sum_{j=1}^N (T_1 ((\phi_{x_j} f)\circ p^{-1}_{x_j}))\circ p_{x_j} (x) ,
\]
where $\psi$ is a mollifier with $\supp \psi \subset B(0,\epsilon)$, $r+ 2 \epsilon<1$.
Clearly
\[
Tr_{\partial B_0} (\phi_0 f)*\psi (x) =Tr_{\partial B_0}\phi_0 f=0.
\]
From properties of the operator $T_1$ we have 
\[
Tr_{\partial B_0} (T_1 ((\phi_{x_j} f)\circ p^{-1}_{x_j}))\circ p_{x_j}= Tr_{\partial B_0} (\phi_{x_j} f) 
\]
for every $j=1,...,N$.
Thus
\[
Tr_{\partial B_0} H f(x)= Tr_{\partial B_0}\phi_0 f + \sum_{j=1}^N Tr_{\partial B_0} (\phi_{x_j} f)  = Tr_{\partial B_0} f.
\]
Moreover from the properties of the operator $T_1$ and convolution for $f\in BV(B_0)\cap L^{\infty}$  we get 
\[
\|H f\|_{L^{\infty}}\leq \|f\|_{L^{\infty}}.
\]
Let $J_{s}$ be a homothety with a center at zero and scale $s$ we define 
\[
H_{B(0,s)}f =  J_s\circ H\circ J_{\frac{1}{s}}  (f-\dashint_{B(0,s)} f)+\dashint_{B(0,s)} f
\]
First we estimate the norm of the gradient 
\[
\begin{split}
\int_{B(0,s)} |\nabla J_s\circ H\circ J_{\frac{1}{s}}  &(f-\dashint_{B(0,s)} f)|\leq \int_{B(0,1)} s^{-n+1} |\nabla  H\circ J_{\frac{1}{s}}  (f-\dashint_{B(0,s)} f)|  
\\ &\leq s^{-n+1} \|H\circ J_{\frac{1}{s}}  (f-\dashint_{B(0,s)} f)\|_{BV(B(0,1))}
\\&\leq s^{-n+1} \|H\|_{BV(B_0)\rightarrow BV(B_0) } \| J_{\frac{1}{s}}  (f-\dashint_{B(0,r)} f)\|_{BV(B_0)}
\\&\stackrel{\mbox{Poincar\'e}}{\leq} C s^{-n+1} \|H\|_{BV(B_0)\rightarrow BV(B_0) } \| \nabla J_{\frac{1}{s}}  (f-\dashint_{B(0,s)} f)\|_{L^1(B(0,1))}
\\&= C \|H\|_{BV(B_0)\rightarrow BV(B_0) } \| \nabla  (f-\dashint_{B(0,s)} f)\|_{L^1(B(0,s))}
\\&= C \|H\|_{BV(B_0)\rightarrow BV(B_0)} \| \nabla  f\|_{L^1(B(0,s))}.
\end{split}
\]
Note that $T_1$ is a bounded linear operator on $L^1(R^d_+)$. Thus $H$ is bounded linear operator on $L^1(B(0,1))$. We estimate the norm of the function
\[
\begin{split}
  \|J_s\circ H\circ J_{\frac{1}{s}}  (f-\dashint_{B(0,s)} f)\|_{L^1(B(0,s))}&=s^{-n}\| H\circ J_{\frac{1}{s}}  (f-\dashint_{B(0,s)} f)\|_{L^1(B_0)}
  \\ &\leq s^{-n} \|H\|_{L^1(B_0)\rightarrow L^1(B_0) } \| J_{\frac{1}{s}} (f-\dashint_{B(0,s)} f)\|_{L^1(B_0)}
  \\& = \|H\|_{L^1(B_0)\rightarrow L^1(B_0) } \| f-\dashint_{B(0,s)} f\|_{L^1(B(0,s))}
  \\& \leq 2 \|H\|_{L^1(B_0)\rightarrow L^1(B_0) }\| f\|_{L^1(B(0,s))}
\end{split}
\]
It is clear $H_B$ satisfies $L^{\infty}$ estimates. Now we define
\[
T_B f= f \mathds{1}_{\Omega\backslash B} + H_B f \mathds{1}_{B}
\]
Note that from the properties of $H_B$ follows that $T_B$ satisfies all of the properties from the statement of the Lemma \ref{ballsmooth}.}
\end{proof}
We return to the proof of Lemma \ref{liniowBV}. We a define family of balls $\{B_{x,A}\}$ where $A\in\mathcal{A}$ and $x\in\partial A$. We put $B_{x,a}=B(x,r_A)$, where $r_A= (\operatorname{diam}A)^{4}$. This collection is a Besicovitch covering of $\bigcup_{A\in \mathcal{A}} \partial A$. There are at most $c_2$ families $\mathcal{F}_j$ of disjoint balls, which cover $\bigcup_{A\in \mathcal{A}} \partial A$. We define operators
\[
\Phi_j f= \sum_{B\in \mathcal{F}_j} T_B f
\]
and 
\[
\Phi f= \Phi_1 \circ \Phi_2 \circ \cdot\circ \Phi{c_2}\, f.
\]
It is clear, that we smooth out the function on neighbourhoods of every point $x\in\bigcup_{A\in \mathcal{A}} \partial A$. Hence for $f\in BV_G$ we get a function  $\Phi f\in W^1_1(\Omega)$. Moreover
\[
\|\Phi\|_{BV(\Omega)\rightarrow BV(\Omega)}\leq K^{c_2}.
\]
Functions $f\in BV_G$ are constant on $A\in\mathcal{A}$. Thus
\begin{equation}\label{241023}
|f_A|\leq C(\mathcal{A}) (\diam A)^{-2} \|f\|_{L^1(\Omega)},    
\end{equation}
where $C(\mathcal{A})$ depends on constants from Definition \ref{defwhit}.\ref{compdistvol}. Balls from $F_{c_2}$ only intersect the neighbouring cubes. By Definition \ref{defwhit}.\ref{compdistvol}-\ref{comneigh} we know that on the neighbouring cubes the right hand sides of \eqref{241023} are comparable. Therefore by \eqref{suponball} we get
\[
\|\Phi_{c_2} f|_A\|_{L^{\infty}}\leq C (\diam A)^{-2} \|f\|_{L^1(\Omega)}.
\]
The estimates above remain comparable on the neighbouring cubes. Repeating this argument $c_2$ times we get
\[
\|\Phi f|_A\|_{L^{\infty}}\leq C (\diam A)^{-2} \|f\|_{L^1(\Omega)}.
\]
The function is modified only on the set $P= \bigcup_{j=1}^{c_2}\bigcup_{B\in\ F_j} B$. The radii of balls intersecting $A\in \mathcal{A}$ are comparable to $(\diam A)^4$. Thus for $C$ depending on the bi-lipschitz constant (Def \ref{defwhit}.\ref{combilip})  we have 
\[
\vol (A\cap P)\leq C (\diam A)^5.
\]
Therefore 
\[
\begin{split}
 |\dashint_{A} f(y) dy - \dashint_{A} \Phi(f)(y) dy |&\leq C (\diam A)^{-2} \vol (A\cap P)(\|\Phi f|_A\|_{L^{\infty}}+|f_A| )
 \\ &\leq C (\diam A)^{-2} (\diam A)^5 (\diam A)^{-2} \|f\|_{L^1(\Omega)}= C \diam A.   
\end{split}
\]}
\end{proof}

% Let $\phi$ be a mollifier, i.e. $\phi\in C^{\infty}(\Rr^2,\Rr_+)$, $\operatorname{supp}\phi \subset B(0,1)$ and $\int_{B(0,1)} \phi = 1$. We define an operator $\Phi$ with the formula 
%\[
%\Phi(f)(x)=\int_{\Omega} f(x-t)\phi\left(\frac{t}{c\operatorname{dist}^2(x,\partial\Omega)}\right)\frac{1}{c^2\operatorname{dist}^4(x,\partial\Omega)}dt
%\] 
%This formula defines a continuous operator from $BV_G$ to $W^{1}_{1}(\Omega)$. Let us observe that by the definition of $\Phi$, $\Phi(f)(x)= f_A$ for every $x\in A$ s.t $\operatorname{dist(x,\partial A)}\geq c\operatorname{dist(x,\partial\Omega)}^2$ which implies \eqref{coscostam}.

Let $P: BV(\Omega)\rightarrow BV_G$ be a projection from $BV(\Omega)$ onto $BV_G$. We define \mbox{$\widetilde{Tr}: BV(\Omega)\rightarrow X(\Omega)$} by the formula 
\[
\widetilde{Tr} f = Tr\, \Phi\,(P f)\qquad \forall\; f\in BV(\Omega).
\] 
If $f\in C(\overline{\Omega})\cap W^{1}_{1}(\Omega)$ then the function $\Phi\, (P f)$ is continuous on $\overline{\Omega}$. Therefore its trace is a restriction of $\Phi\, (P f)$ to the boundary. However the value of the restriction at point $x\in \partial \Omega$ for the function from $C(\overline{\Omega})$ is equal to the limit of $\dashint_{A}\Phi( P f(y)) dy$ along the branch $\operatorname{br}(x)$. From \eqref{coscostam} and the definition of the space $BV_T$
\[
\Phi(P(f))(x)= f(x) \qquad \forall \,x\in\partial\Omega.
\] 
Since $f\in C(\overline{\Omega})\cap W^{1}_{1}(\Omega)$ are dense in $W^{1}_{1}(\Omega)$ and $\widetilde{Tr} f= Tr f$ the operator $\widetilde{Tr}$ is an extension of the trace operator to $BV(\Omega)$. We will abuse the notation and from now on we will denote $\widetilde{Tr}$ by $Tr$.  From the definition of \textcolor{chn}{the trace operator} it follows that
\begin{equation}\label{sladzero}
Tr f = 0 \qquad \forall \, f \in BV_{\mathcal{A},0}.
\end{equation} 
%\begin{Le}\label{sladylipshitza}
%Let $f$ be a lipschitz function with respect to the euclidean metric on $\partial%\Omega$. For every $\varepsilon$ there exist a lipschitz function $f_n$ on $%%%%%%\Rr^2$ such that its restrictions to $\Omega$ satisfy
%\[
%\begin{split}
%Tr f_{\varepsilon}&= f\\
%\|f_{\varepsilon}\|_{BV(\Omega)}\leq (1+\varepsilon)\|f\|_{X(\Omega)}
%\end{split}
%\]
%\end{Le}
%\begin{proof}
%Function $f$ extends to a lipschitz function on $\Rr^2$ (Kirszbraun Theorem). %Hence it is a trace of function from $W^1_{1}(\Omega)$. Therefore there exists a  %smooth functions $g$ such that 
%\[
%\|Tr g -f\|\leq \frac{\varespilon}{2}. 
%\]
%For trace g
\section{Proof of Peetre's theorem}
In this section we will give a proof of Theorem \ref{Skladak}.  
\begin{proof}
Since $\Omega$ has Lipschitz boundary by theorem of Gagliardo $X(\Omega)\cong L^1(\partial \Omega)$ - \textcolor{chn}{the} space of functions integrable with respect to the $1$-dimensional Hausdorff measure. Let us denote by $P: BV(\Omega) \rightarrow  BV_{G}$ the projection onto $BV_G$. Assume there exists $S: L^{1}(\partial\Omega)\rightarrow W^{1}_1(\Omega)\subset BV(\Omega)$ such that $Tr\circ S =Id_{L^1(\partial \Omega)}$. Then the following diagram is commutative 
\begin{center}
\begin{tikzpicture}
  \matrix (m) [matrix of math nodes,row sep=3em,column sep=4em,minimum width=2em]
  {
     L^1(\partial\Omega)& BV(\Omega) & L^1(\partial \Omega) & \\
     & BV_{G} &  \\};
  \path[-stealth]
   (m-1-1) edge node [above] {S} (m-1-2)
    (m-1-2) edge node [left] {P} (m-2-2)
            edge  node [above] {$Tr$} (m-1-3)
      (m-2-2) edge node [below] {$Tr$} (m-1-3)
          ;
\end{tikzpicture}
\end{center}
 From \eqref{sladzero} and \textcolor{chn}{the theorem of Gagliardo} we conclude that $Tr|_{BV_{\textcolor{chn}{G}}}$ is onto $L^1(\partial\Omega)$. On the other hand,  $Tr\circ P\circ S = Id_{L^1(\partial \Omega)}$. Hence $L^1(\partial 
\Omega)$ is isomorphic to a subspace of $BV_G$. The definition of $BV_G$ implies that $BV_G$ is isomorphic to a subspace of $\ell^1(V)\oplus\ell^1(E)\cong \ell^1$. Since the measure on the boundary is non atomic, $L^1(\partial \Omega)\cong L^1(\mathbb{T})$. However, it is well known that $L^1$ could not be embedded in $\ell^1$. (To see this, note that by Khintchine inequality, Radamacher functions span $\ell^2$ in $L^1$ space. The space $\ell^2$ could not be embedded in $\ell^1$ because every subspace of $\ell^1$ contains a copy $\ell^1$ (\cite{MR0500056}, Proposition 1.a.11).
\end{proof}
\section{Trace operator on von Koch's snowflake}\label{kochowanie}
Let $\Omega_K$ be a domain bounded by von Koch's curve. Since $\Omega_K$ is simply connected and von Koch's curve is a Jordan curve, we can use all the properties from the first section. It is enough to show that there exists a right inverse $S: X(\Omega_K) \rightarrow BV_G$ to the trace on $BV_G$ because then $\Phi\circ S: X(\Omega_K)\rightarrow W^{1}_{1} (\Omega_K)$ and $Tr\circ  \Phi\circ S =Id_{X(\Omega_K)}$, where $\Phi$ is an operator from Lemma \ref{liniowBV}. \\
It is a well known fact that $\Omega_K$ satisfies Poincar\'e inequality (eg. \cite{MR1359964}). Therefore \textcolor{chn}{(here and further in this section the constants are absolute and specific for von Koch's snowflake and it's concrete Whitney's covering; in possible generalizations for other domains given by Carleson construction they will depend on specific geometry, bi-lipschitz constants of Whitney's covering and corresponding scaling factor).}
\[
\bigg\|f-\dashint_{\Omega_K}f(y) dy\bigg\|_{L^1(\Omega_K)}\leq \textcolor{chn}{C(\Omega_K)}|\nabla f|_{\Omega_K}, 
\]
where $|\mu|_{\Omega_K}$ is a total variation of a measure $\mu$ on $\Omega_K$. This inequality implies 
\[
BV_G\cong \dot{BV}_T \oplus \Rr,
\]
where 
\[
\dot{BV}_T= BV_{\textcolor{chn}{T}}\slash P_0, 
\]
where $P_0$ is the space of constant functions on $\Omega_K$. In this case the norm is equal to
\[
\|f\|_{\dot{BV}_T}=\sum_{\{A,B\}\in E_T} |f_{A}-f_{B}| l(\partial A\cap \partial B). 
\]
 Similarly $X(\Omega_K)= \Rr\oplus \dot{X}(\Omega_K)$ for \textcolor{chn}{the} quotient space $\dot{X}(\Omega_K)=X\slash P_0$. \textcolor{chn}{We have already established that for all $g\in \dot{X}(\Omega_K)$ we have
 \[
 \|g\|_{\dot{X}(\Omega_K)}=\inf\{\|f\|_{\dot{BV}_T}: Tr f:= g\}. 
 \] We reduce the problem to finding a right inverse operator to the trace $Tr:\dot{BV}_T \rightarrow \dot{X}(\Omega_K)$. 
We will show it's existence for a carefully chosen Whitney covering.} Construction of this covering is described in the Appendix. We introduce the following notation
\begin{Def}
For a given tree $T$ by $R:=R(T)$ we will denote the root of\; $T$. For a vertex $A\in V_T$ by $D_n(A)$ we denote descendants of $A$ of order exactly $n$ and we put $D_n=D_n(R)$. For a vertex $A\in V_T$ by $A\downarrow$ we denote its unique father. We will denote by $D\!\!\uparrow\!(A)$ the set of all descendants of $A$ i.e. $D\!\!\uparrow\!(A)=\bigcup_{n} D_n(A)$.
\end{Def} 
We take a covering $\mathcal{A_K}$ as shown on the Figure \ref{KOchsnow}.
\begin{figure}
\begin{center}
\includegraphics[width=25em]{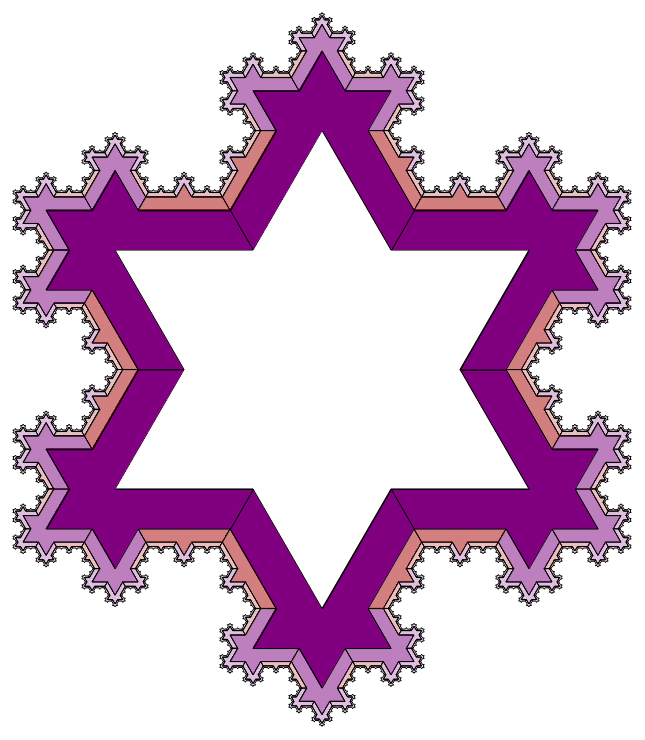}
\caption{Self similar Whitney decomposition of von Koch's snowflake}\label{KOchsnow}
\end{center}
\end{figure}
 This covering of von Koch's snowflake is easy to describe if we look at its Whitney tree $T_K$. The root of $T_K$ is a six pointed star with six "pants" shaped descendants. We denote it by $R$. In this tree there are three types of polygons/vertices. The aforementioned root, "pants" shaped polygons and "palace" shaped polygons. The type of a vertex describes direct descendants of this vertex (Figure \ref{potomkowie}).
 \begin{figure}
\begin{center}
\includegraphics[width=15em]{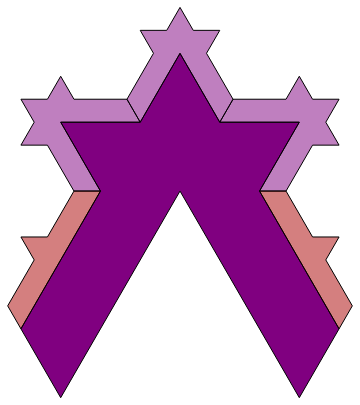}
\includegraphics[width=15em]{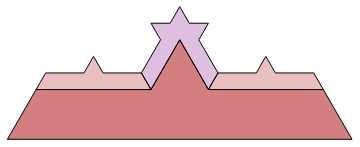}
\caption{On the left "pants" shaped polygon and its descendants, on the right "palace" shaped polygon and its descendants}\label{potomkowie}
\end{center}
\end{figure}
Polygons in $D_{n+1}$ are similar to polygons from $D_n$ with a scale $\frac{1}{3}$. The tree $T_K$ is the tree from Definition \ref{dobredrzewo}. Indeed let G be a Whitney graph of this decomposition. We denote by $H_n=\{\{A,B\}\in E_{G}: A\in D_n, B\in D_n\}$. We see that
\[
\|f\|_{\dot{BV}_G}\simeq \sum_{n}\sum_{\textcolor{chn}{\{A,B\}}\in H_n} \frac{1}{3^n} |f_A- f_B|+ \sum_{n} \sum_{A\in D_n} \frac{1}{3^n} |f_A- f_{A\downarrow}|  
\]
However for $\{A,B\} \in H_n$ we have from \textcolor{chn}{the} triangle inequality
\[
\frac{1}{3^n}|f_{A}-f_B|\leq \frac{1}{3^n}|f_A- f_{A\downarrow}|+ \frac{1}{3^n}|f_B - f_{B\downarrow}| +\frac{1}{3}\frac{1}{3^{n-1}} |f_{A\downarrow}-f_{B\downarrow}|
\]
Since we use edge $\{A,A\downarrow\}$ in the estimate for at most two edges from $H_n$ we get  
\[
\sum_{\textcolor{chn}{\{A,B\}}\in H_n} \frac{1}{3^n} |f_A- f_B|\leq \frac{1}{3} \sum_{\textcolor{chn}{\{A,B\}}\in H_{n-1}} \frac{1}{3^{n-1}} |f_A- f_B| + 2 \sum_{A\in D_n} \frac{1}{3^n} |f_A- f_{A\downarrow}|,   
\]
We get by induction
\[
\sum_{n}\sum_{\textcolor{chn}{\{A,B\}}\in H_n} \frac{1}{3^n} |f_A- f_B|\leq 2 \sum_{n}\sum_{\textcolor{chn}{m=0}}^{n}\frac{1}{3^{n-m}}\sum_{A\in D_{\textcolor{chn}{m}}} \frac{1}{3^m} |f_A- f_{A\downarrow}|\textcolor{chn}{.}
\]
The geometric sequence with quotient $\frac{1}{3}$ is convergent. Therefore for such Whitney covering the norm of $\dot{BV}_{T_K}$ satisfies
\[
\|f\|_{BV_G}\simeq  \sum_{n=1}^{\infty}\sum_{A\in D_n} |f_{A} -f_{A\downarrow}| 3^{-n}\simeq \|f\|_{\dot{BV}_{T_K}}\textcolor{chn}{.}
\] 
Further we will use \textcolor{chn}{the} above formula as a norm on $\dot{BV}_{T_K}$.
We want to study the norm on $\dot{X}(\Omega_K)$. To be precise, we want to define and calculate the norm of $\|\sum_j a_j\mathds{1}_{[x_j,y_j]}\|_{\dot{X}(\Omega_K)}$. 
\begin{Def}
Let us denote by $D_{\infty}(A)$ a cylinder of $A$, i.e. $D_{\infty}(A)=\{x\in\partial\Omega_K: A\in\operatorname{br}(x)\}$.  
We call an arc \textcolor{chn}{$[x,y]$} rational if there exists a finite sequence $A_1,...,A_k \in V_{T_K}$ s.t. $[x,y]:=\cup_{n=1}^{k} D_{\infty} (A_n)$ and we say that points x,y are rational points.
\end{Def}
For a given arc $[x,y]$ there exists a sequence of vertices $A_k\in V_{T_K}$ s.t. $[x,y]=\bigcup_k D_{\infty}(A_k)$ and sets $D_{\infty}(A_k)$ are pairwise disjoint. Moreover this sequence can be taken maximal in the sense that if \textcolor{chn}{a} vertex $A$ is in the sequence then there exists $z\in D_{\infty}(A\downarrow)$, which is not in $[x,y]$. Such \textcolor{chn}{a} sequence is unique for $[x,y]$. Let $n(k)$ be a natural number such that $A_k\in D_{n(K)}$. 
Let
\[
d([x,y])=\sum_{k} 3^{-n(k)}.
\]
We introduce an auxiliary metric on the boundary $\partial\Omega_K$
\[
d_{K}(x,y)= \min\{ d([x,y]),d([y,x])\}.
\] 
It is easy to check that $d_K(x,y)$ is a metric \textcolor{chn}{which is} greater than two dimensional euclidean metric. We prefer this metric over \textcolor{chn}{ the euclidean one} because it is a monotone function on an arc $[x,y]$ with respect to the natural order on the arc.
In the lemma below we show that for every rational arc and every monotone right continuous function on this arc there exists a "good" extension of this function to $\dot{BV}_{T_K}$.\textcolor{chn}{  We call an arc [x,y] a short arc iff
\[
d([x,y])\leq d([y,x]).
\]}
We will say that a function is monotone on an arc if it is monotone with respect to the natural order on the arc.
\begin{Le}\label{charprzyb}
Let $x,y\in \partial \Omega_K$ and $[x,y]$ be a short arc. Let function $F : \partial \Omega_K\rightarrow \mathbb{R}$ be a monotone and continuous function on the arc $[x,y]$ and $\operatorname{supp}(F)\subset [x,y]$. There exists $h\in \dot{BV}_{T_K}$ such that \begin{enumerate}
\item $\|h\|_{\dot{BV}(\Omega_K)}\lesssim (|F(x)|+|F(x)-F(y)|) d_K(x,y),$
\item $F(z)= \underset{\substack{A\in \br(z)\\ A\rightarrow z}}{\lim} \dashint h(y) dy \qquad \forall \, z\in\partial\Omega_K$.
\end{enumerate}
\end{Le}
\begin{proof} 
First we prove the existence of $\mbox{}_sh$ the good extension for characteristics functions on arcs $[s,y]\subset [x,y]$. Since arc $[x,y]$ is a short arc then an arc $[s,y]$ is a short arc and can be written as a countable sum $\bigcup_{k=1}^{M} D_{\infty}(A_k)$ in a unique way mentioned in the definition of $d_K$. From this assumption it is clear that $\#\{A_k: A_k\in D_n\} \leq 10$. Let us put
\[
_sh_A= \sum_{k} \mathds{1}_{D\uparrow\!(A_k)}(A)\textcolor{chn}{.}
\]
Clearly along every infinite branch $\operatorname{br}(z)$ the limit of $\underset{\substack{A\in \br(z)\\ A\rightarrow z}}{\lim} \mbox{}_sh_A$ exists and it is equal to $\mathds{1}_{[s,y]}(z)$. We need to estimate the total variation of $\mbox{}_sh$. From the definition of $A_k$ it follows that
\[
\|_sh\|_{\dot{BV}(\Omega_K)}=\sum_{k} 3^{-n(k)}= d_K(x,s)\leq  |\mathds{1}_{[s,y]}(y)-\mathds{1}_{[s,y]}(x)| d_K(x,y)= d_K(x,y)\textcolor{chn}{.} 
\]
Let us assume that $F$ is an increasing function. For $F$ let $\mu$ be its Lebesgue-Stieltjes measure $\mu$ i.e. $\mu ((a,b]) = F(b)- F(a)$. From the definition of the measure $\mu$ and the assumptions on $F$ we get    
\[
F(t)= F(x)+\int_{x}^{t} 1\; d\mu(s) = F(x)\mathds{1}_{[x,y]}(t) + \int_{x}^{y} \mathds{1}_{[s,y]}(t) d\mu(s)\textcolor{chn}{.}  
\]
We define $h$ by the formula
\[
h_A= F(x)_xh_A +\int_{x}^{y} \mbox{}_sh_A d\mu (s)\textcolor{chn}{.} 
\]
It follows from the definition of $h$ that
\[
\begin{split}
\|h\|_{\dot{BV}_{T_K}}= |F(x)|\|_yh\|_{\dot{BV}_{T_K}}+\int_{x}^{y} \|\mbox{}_sh\|_{\dot{BV}_{T_K}} d\mu(s)&\leq  |F(x)|d_K(x,y) + \int_{x}^{y}d_K(x,y) d\mu(s)
\\&= d_K(x,y)( |F(x)-F(y)| +|F(x)|)\textcolor{chn}{.} 
\end{split}
\]
Since $\mbox{}_sh_A\leq 1$ it follows from Lebesgue\textcolor{chn}{'s} dominated convergence theorem that
\[
\begin{split}
\underset{\substack{A\in \br(z)\\ A\rightarrow z}}{\lim} h_A&= F(x)+ \underset{\substack{A\in \br(z)\\ A\rightarrow z}}{\lim} \int_{x}^{y} \mbox{}_sh_A d\mu(s)
\\&= F(x)+\int_{x}^{y}  \underset{\substack{A\in \br(z)\\ A\rightarrow z}}{\lim} \mbox{}_sh_A d\mu(s)= F(x)+\int_{x}^{y} \mathds{1}_{[s,y]}(z) d\mu (s) = F(z) \textcolor{chn}{.} 
\end{split}
\]
\end{proof}
\begin{Le}\label{Lipschmono}
For every $\;x,y\in\partial \Omega_K$ and $a,b\in\mathbb{R}$ there is a monotone  Lipschitz function $f$, with respect to the euclidean metric, on the arc $[x,y]$ such that $f(x)=a$ and $f(y)=b$.  
\end{Le}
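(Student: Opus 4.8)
The plan is to build $f$ by parametrizing the arc $[x,y]$ by its natural self-similar measure and letting $f$ be \emph{affine} in that parameter. Write $\delta=\log 4/\log 3$ for the dimension of $\partial\Omega_K$ and let $\mathcal H^{\delta}$ be the $\delta$-dimensional Hausdorff measure, which on the snowflake is finite, non-atomic, and of full support. Parametrize the arc by $\gamma\colon[0,\ell]\to[x,y]$, where $\ell=\mathcal H^{\delta}([x,y])$ and $\gamma(s)$ is the unique point with $\mathcal H^{\delta}$-measure of the sub-arc from $x$ to $\gamma(s)$ equal to $s$; since $\mathcal H^{\delta}$ is non-atomic with full support, $\gamma$ is a monotone homeomorphism of $[0,\ell]$ onto the arc. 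I would then set
\[
f(\gamma(s))=a+(b-a)\,\frac{s}{\ell}.
\]
Monotonicity of $f$ along the arc and the endpoint values $f(x)=a$, $f(y)=b$ are immediate, since $s\mapsto\gamma(s)$ traverses the arc monotonically.

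It remains to verify that $f$ is Lipschitz for the euclidean metric, and this reduces to the single geometric comparison
\[
\mathcal H^{\delta}(\operatorname{arc}(p,q))\lesssim |p-q|\qquad\text{for all }p,q\in\partial\Omega_K .
\]
Granting it, if $p=\gamma(s)$, $q=\gamma(t)$ then $|s-t|=\mathcal H^{\delta}(\operatorname{arc}(p,q))\lesssim|p-q|$, so $|f(p)-f(q)|=\tfrac{|b-a|}{\ell}|s-t|\lesssim\tfrac{|b-a|}{\ell}|p-q|$, as desired. To obtain the comparison I would combine two facts about the von Koch curve. First, $\partial\Omega_K$ is $\delta$-Ahlfors regular, so a sub-arc of diameter $r$ has $\mathcal H^{\delta}$-measure $\asymp r^{\delta}$, whence $\mathcal H^{\delta}(\operatorname{arc}(p,q))\asymp\operatorname{diam}(\operatorname{arc}(p,q))^{\delta}$. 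Second, $\partial\Omega_K$ is a quasicircle and hence has \emph{bounded turning}: $\operatorname{diam}(\operatorname{arc}(p,q))\le C|p-q|$. Combining these with $\delta>1$ and the bounded diameter $D$ of $\Omega_K$,
\[
\mathcal H^{\delta}(\operatorname{arc}(p,q))\asymp\operatorname{diam}(\operatorname{arc}(p,q))^{\delta}\lesssim|p-q|^{\delta}=|p-q|\,|p-q|^{\delta-1}\le D^{\,\delta-1}\,|p-q|,
\]
which is exactly the bound needed.

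The main obstacle is the bounded-turning estimate $\operatorname{diam}(\operatorname{arc}(p,q))\lesssim|p-q|$; the Ahlfors regularity and the inequality $\delta>1$ are routine. I would prove bounded turning from the self-similar structure already used for $T_K$. The ratio $\Phi(p,q)=|p-q|/\operatorname{diam}(\operatorname{arc}(p,q))$ is scale-invariant, so after applying the similarity that maps the coarsest level-$m$ piece splitting $\operatorname{arc}(p,q)$ onto the unit snowflake $K$, one is reduced to points $\tilde p,\tilde q\in K$ whose arc meets at least two of the level-$1$ sub-pieces. The only way $\Phi$ can degenerate is when $\tilde p,\tilde q$ both approach a common junction vertex $w$ from the two adjacent sub-pieces; blowing up at $w$ reproduces the same configuration self-similarly, so positivity of $\inf\Phi$ follows from a cone (opening-angle) condition at the finitely many junction types — at every von Koch junction the two incident pieces leave $w$ in directions separated by an angle bounded away from $\pi$ (the interior angles are $60^{\circ}$ or $120^{\circ}$), which forces $|\tilde p-\tilde q|\gtrsim|\tilde p-w|+|\tilde q-w|\asymp\operatorname{diam}(\operatorname{arc})$. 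Verifying this cone condition for the wiggly sub-pieces is the one genuinely technical point; alternatively one may simply invoke that the von Koch snowflake is a quasicircle. Once this geometric input is in place, the construction above finishes the proof.
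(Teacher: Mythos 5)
Your construction is genuinely different from the paper's, and the overall idea (parametrize by the self-similar measure, take $f$ affine in that parameter) is viable, but one step fails as stated. The comparison $\mathcal H^{\delta}(\operatorname{arc}(p,q))\lesssim|p-q|$ cannot hold with a uniform constant if $\operatorname{arc}(p,q)$ means what your argument needs, namely the sub-arc of the \emph{given} arc $[x,y]$ between $p$ and $q$. Bounded turning controls the diameter of the \emph{smaller} of the two arcs joining $p$ and $q$, and the sub-arc of $[x,y]$ need not be that one: take $|x-y|=\varepsilon$ tiny with $[x,y]$ the long arc (nearly the whole snowflake), and $p,q\in[x,y]$ close to $x,y$ respectively; then $\mathcal H^{\delta}\bigl(\operatorname{arc}_{[x,y]}(p,q)\bigr)\asymp\mathcal H^{\delta}(\partial\Omega_K)$ while $|p-q|\approx\varepsilon$. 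The same example shows your concluding bound $|f(p)-f(q)|\lesssim\frac{|b-a|}{\ell}|p-q|$ is not just unproven but impossible: any $f$ with $f(x)=a$, $f(y)=b$ has Lipschitz constant at least $|a-b|/|x-y|=|a-b|/\varepsilon$, which blows up while $\ell$ stays bounded below, so no argument can yield a constant depending only on $\ell$. The patch is local, because the lemma permits the Lipschitz constant to depend on $x,y$: either $\operatorname{diam}\bigl(\operatorname{arc}_{[x,y]}(p,q)\bigr)\leq C|p-q|$ and your chain of inequalities applies verbatim, or else, by bounded turning, the \emph{other} arc joining $p,q$ has diameter $\leq C|p-q|$; since that arc contains the complementary arc $\partial\Omega_K\setminus[x,y]$, this forces
\[
|p-q|\geq C^{-1}\operatorname{diam}\bigl(\partial\Omega_K\setminus[x,y]\bigr)=:c(x,y)>0,
\]
and then trivially $\mathcal H^{\delta}\bigl(\operatorname{arc}_{[x,y]}(p,q)\bigr)\leq\mathcal H^{\delta}(\partial\Omega_K)\leq\frac{\mathcal H^{\delta}(\partial\Omega_K)}{c(x,y)}\,|p-q|$. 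With this two-case comparison (and $x\neq y$, which the lemma tacitly assumes) your argument goes through.

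For comparison, the paper's proof is an elementary bisection: given values at the endpoints of a sub-arc $[z,t]$, it picks $s\in[z,t]$ with $|s-z|\geq|z-t|/2$ and $|s-t|\geq|z-t|/2$ (such $s$ exists by the intermediate value theorem on a connected curve), sets $f(s)=\frac{f(z)+f(t)}{2}$, recurses, and extends from the resulting dense set. This avoids Hausdorff measure and Ahlfors regularity entirely; it produces consecutive bisection points whose value gaps halve while their mutual euclidean distances shrink by at most half, giving the Lipschitz bound along consecutive points with constant $|b-a|/|x-y|$. Note, however, that upgrading this to a Lipschitz bound for arbitrary pairs requires exactly the bounded-turning input you invoke (the curve must not fold two parameter-distant points close together), a verification the paper compresses into ``Function $f$ has desired properties.'' So your explicit appeal to quasicircle geometry is the same geometric content made visible, at the cost of heavier machinery; for the bounded-turning fact itself, citing that the von Koch snowflake is a quasicircle is cleaner than your blow-up sketch at junction points.
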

\begin{proof}
In order to construct such function we proceed inductively. If we have defined values in the arc $[z,t]$ only at \textcolor{chn}{a} points $z$ and $t$ we choose point $s\in [z,t]$ such that $|s-z|\geq |z-t|/2$ and $|s-t|\geq |z-t|/2$ \textcolor{chn}{(}since arc $[z,t]$ is a one dimensional curve there exists such a point\textcolor{chn}{)}. We put $f(s):= \frac{f(z)+f(t)}{2}$. \textcolor{chn}{This procedure allows us to define $f$ on a dense subset.} We extend $f$ to the whole arc. \textcolor{chn}{The} function $f$ has desired properties.  
\end{proof}
In the lemma below we prove the existence of a class of functions in $\dot{BV}$, which have desirable properties and every function from this class provides a good approximation of the norm of its trace on the boundary.
\begin{Le}\label{graniceistniejawszedzie}
Let $x,y \in \partial \Omega_K$. There are sequences of functions $f_n\in \dot{BV}(\Omega_K)$, $g_n\in C(\overline{\Omega_K})\cap \dot{BV}(\Omega_K)$, and  $h_n\in \dot{BV}(\Omega_K)$ such that:
\begin{enumerate}
\item $f_n=h_n+ g_n$,
\item For every $z\in\partial\Omega_K$,\; $\mathds{1}_{[x,y]}(z)=\underset{\substack{A\in \br(z)\\ A\rightarrow z}}{\lim} \dashint_{A} f_n(y)dy $,
\item $\|g_n\|_{\dot{BV}(\Omega_K)}\leq (1+\frac{1}{n^2})\|Tr\; g_n\|_{\dot{X}(\Omega_K)}$.
\item $Tr\; g_n$ is a Cauchy sequence in $\dot{X}(\Omega_K)$  
\item $\|h_n\|_{\dot{BV}(\Omega_K)}\leq \frac{1}{n^2}$.
\item $\|f_n\|_{\dot{BV}(\Omega_K)}\leq (1+\frac{1}{n^2})\|Tr\, g_n\|_{\dot{X}(\Omega_K)}+\frac{1}{n^2}$.
\end{enumerate} 
\end{Le}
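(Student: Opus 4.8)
The plan is to realize the trace $\mathds{1}_{[x,y]}$ as the sum of a continuous, nearly norm-optimal extension $g_n$ of a smoothed indicator, together with a small correction $h_n$ that absorbs the two jumps at the endpoints $x,y$. Conditions (1), (3) and (6) will then be almost immediate, and (4) will follow from (5), so the real content is the simultaneous construction of $g_n$ and $h_n$ satisfying (2) and (5).

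First I would fix the smoothing. Choose rational points $x_n,y_n$ on the arc $[x,y]$, approaching $x$ and $y$ respectively, with $d_K(x,x_n)\to 0$ and $d_K(y_n,y)\to 0$ (possible since rational points are dense and $d_K$ dominates the Euclidean metric). Using Lemma \ref{Lipschmono} on the short arcs $[x,x_n]$ and $[y_n,y]$ I build a function $F_n\colon\partial\Omega_K\to\Rr$ that vanishes off $[x,y]$, equals $1$ on $[x_n,y_n]$, and rises (resp.\ falls) monotonically and continuously from $0$ to $1$ (resp.\ from $1$ to $0$) on the two transition arcs. Thus $F_n$ is continuous on $\partial\Omega_K$ and supported in $[x,y]$. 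Because the trace norm on $\dot X(\Omega_K)$ is defined as an infimum over (continuous) extensions, I may select $g_n\in C(\overline{\Omega_K})\cap\dot{BV}(\Omega_K)$ with $Tr\,g_n=F_n$ and $\|g_n\|_{\dot{BV}(\Omega_K)}\le(1+\tfrac1{n^2})\|Tr\,g_n\|_{\dot X(\Omega_K)}$, which is exactly (3); since $g_n$ is continuous on $\overline{\Omega_K}$, its branch limit at every $z$ equals $F_n(z)$.

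Next I would construct the correction. Set the target trace $Tr\,h_n:=\mathds{1}_{[x,y]}-F_n$, which is supported on the two transition arcs, is monotone on each of them, and carries a unit jump at $x$ and at $y$. Rerunning the construction in the proof of Lemma \ref{charprzyb} for this (monotone, boundedly jumping) function on each transition arc, that is, writing it as an integral of the indicator building blocks $\mbox{}_s h$, whose $\dot{BV}_{T_K}$ norm is exactly the $d_K$-length of the corresponding subarc, yields $h_n\in\dot{BV}(\Omega_K)$ with everywhere-existing branch limit equal to $\mathds{1}_{[x,y]}-F_n$ and with
\[
\|h_n\|_{\dot{BV}(\Omega_K)}\lesssim d_K(x,x_n)+d_K(y_n,y).
\]
Choosing $x_n,y_n$ close enough that the right-hand side is at most $\tfrac1{n^2}$ gives (5).

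Finally I would assemble $f_n:=g_n+h_n$. Condition (1) holds by definition; adding the branch limits of $g_n$ and $h_n$ gives $F_n+(\mathds{1}_{[x,y]}-F_n)=\mathds{1}_{[x,y]}$ pointwise, which is (2); condition (6) is the triangle inequality combined with (3) and (5); and (4) follows because $\|Tr\,g_n-\mathds{1}_{[x,y]}\|_{\dot X(\Omega_K)}=\|Tr\,h_n\|_{\dot X(\Omega_K)}\le\|h_n\|_{\dot{BV}(\Omega_K)}\le\tfrac1{n^2}\to0$, so $(Tr\,g_n)$ converges and is in particular Cauchy. The main obstacle is the correction step: one must produce $h_n$ realizing the discontinuous jump $\mathds{1}_{[x,y]}-F_n$ as an everywhere-defined branch limit while keeping its norm below $1/n^2$. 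This is exactly where the explicit extension $\mbox{}_s h$ of Lemma \ref{charprzyb}, whose norm equals the $d_K$-length of its arc, is indispensable, together with the fact that $d_K$-short arcs abut the endpoints; a secondary subtlety is that the near-optimal $g_n$ must be taken continuous with the prescribed branch limit $F_n$, which is legitimate precisely because the trace norm is defined through continuous functions.
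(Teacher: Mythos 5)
Your proof is correct and follows essentially the same route as the paper's: decompose $\mathds{1}_{[x,y]}$ into a Lipschitz boundary function (extended by a continuous, near-optimal representative via the definition of the quotient norm, giving $g_n$ and (3)) plus monotone pieces supported on short arcs at the endpoints (extended with norm controlled by their $d_K$-length via Lemma~\ref{charprzyb}, giving $h_n$ and (5)); the paper places the transition arcs just outside $[x,y]$ while you place them inside, which is immaterial. The only point to adjust is your phrasing of (4): since $\mathds{1}_{[x,y]}$ is not yet known to be an element of $\dot{X}(\Omega_K)$ at this stage (it is \emph{defined} as the limit of the Cauchy sequence $Tr\,g_n$ only after this lemma), the estimate should be written as $Tr\,g_n-Tr\,g_m=Tr(h_m-h_n)$, bounded by a constant times $\|h_n\|_{\dot{BV}(\Omega_K)}+\|h_m\|_{\dot{BV}(\Omega_K)}$ using the boundedness of the extended trace operator on $BV$ --- which is precisely how the paper argues, via its small-norm correction $\tilde{q}$.
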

\begin{proof}
We use Lemma \ref{Lipschmono}. For every $\varepsilon$ and every rational arc $[x,y]$, the characteristic function of $[x,y]$ can be written as sum of a Lipschitz function $g$ and a two monotone Lipschitz functions $p_1,p_2$, with supports in arcs $[t_1,x],[y,t_2]$ respectively. Moreover $t_1,t_2$ are rational,  $|t_1-x|+|t_2-y|\leq \varepsilon$ and the monotone functions $p_i$ are bounded uniformly by one. Hence from the Lemma \ref{charprzyb} for every function $p_i$ there exists a function $f^i$ such that $\|f^i\|_{\dot{BV}(\Omega_K)}\leq C \varepsilon$ and for every $z\in \partial\Omega_K$
\[
\underset{\substack{A\in \br(z)\\ A\rightarrow z}}{\lim} f^i_A=p_i(z) .
\] 
Any Lipschitz extension of $g$ to $\Omega_K$ is in $W^{1}_{1}(\Omega_K$). Hence $g$ is in the trace space. From the definition of the trace space there exists a $g_{\varepsilon}\in C(\overline{\Omega_K})\cap \dot{BV}(\Omega_K)$ such that
\[
\begin{split}
\|g_{\varepsilon}\|_{\dot{BV}(\Omega_K)}&\leq (1+\varepsilon)\|g\|_{\dot{X}(
\Omega_K)},\\
Tr g_{\varepsilon}&= g.
\end{split}
\]
Since $g_{\varepsilon}$ is in $C(\overline{\Omega_K})$ we have $ \underset{\substack{A\in \br(x)\\ A\rightarrow x}}{\lim} \dashint_{A} g_{\varepsilon}(y)dy= g(x)$. Therefore the function $f=g_{\varepsilon} + f^1+f^2= g_{\varepsilon}+h_{\varepsilon}$ has desired properties. The limits along $\operatorname{br}(z)$ of  $\dashint_{A} g_{\varepsilon}(y)dy$ exist and are equal to $\mathds{1}_{[x,y]}(z)$ for every $z\in\partial\Omega_K$ and
\[
\|f\|_{\dot{BV}(\Omega_K)}\leq (1+\varepsilon)\|g\|_{\dot{X}(\Omega_K)} + C\varepsilon,
\] 
where the term $C\varepsilon$ is the estimate on the norms of the functions $f^i$. For every $n$ we choose suitable $\varepsilon$ and we get \textcolor{chn}{the} desired properties. The sequence $Tr g_n$ is Cauchy sequence. Indeed for a given function $g_n$ and $m>n$ there exists a continuous piecewise monotone function $q$ with support on a small set on the boundary such that
\[
q + Tr g_n=  Tr g_m  ,
\]
From Lemma \ref{charprzyb} there exists a function $\tilde{q}\in\dot{BV}(\Omega_K)$ with a small norm such that
\[
Tr (g_n+ \tilde{q})= Tr g_m \textcolor{chn}{.} 
\]
The size of the support of $q$ depends only on $g_n$. Therefore
\[
\|Tr g_n - Tr g_m\|_{\dot{X}(\Omega_K)}\leq \epsilon
\]
for sufficiently large $n,m$. 
\end{proof}
The Cauchy sequence \textcolor{chn}{$ \{ Tr\, g_n\}$ } defines an element \textcolor{chn}{$u\in \dot{X}(\Omega_K)$}. From the analogous argument as in the above Lemma if $f\in \dot{BV}(\Omega_K)$ satisfies $\mathds{1}_{[x,y]}(z)=\underset{\substack{A\in \br(z)\\ A\rightarrow z}}{\lim} \dashint_{A} f(y)dy $ for every $z$ on the boundary then \textcolor{chn}{$Tr f =u$}. To simplify \textcolor{chn}{the} notation we denote \textcolor{chn}{$u= \mathds{1}_{[x,y]}$}. From the point 6. of the Lemma \ref{graniceistniejawszedzie} it follows
\[
\|\textcolor{chn}{u}\|_{\dot{X}(\Omega_K)}=\lim_{n\rightarrow\infty}  \|Tr\, g_n\|_{\dot{X}(\Omega_K)}= \lim_{n\rightarrow\infty} \|f_n\|_{\dot{BV}(\Omega_K)}
\]
Since the projection from $\dot{BV}$ onto $\dot{BV}_{T_K}$ preserves the trace, we may assume that functions $f_n$ are from $\dot{BV}_{T_K}$. Therefore the function $g=\sum_{j} a_j \mathds{1}[x_j,y_k]$, whose arcs $[x_j,y_j]$ are rational, satisfies
\[
\|g\|_{\dot{X}(\Omega_K)}\simeq\inf\{\|f\|_{\dot{BV}_{T_K}} : f\in L\mbox{ and } Trf=g\},
\] 
where $L\subset\dot{BV}_{T_K}$ consists of such $f$ that the limit $ \underset{\substack{A\in \br(x)\\ A\rightarrow x}}{\lim} f_A $ exists for every $x\in \partial \Omega_K$ and it is equal to $Tr f(x)$.  

\begin{Rem} In the above lemmas we abuse the notation a bit. For rational points $x$ there are two branches $\operatorname{br}(x)$. If we look at a finite linear combination of characteristic functions of arcs, the are finitely many points (endpoints of segments) on which the limits over this two the branches are different. However they are equal to the value of the trace either on left or right side of that endpoint. Further in the article we are only interested in branches which contain some specific vertex $A$. Hence we are interested only in one of the problematic branches and it is clear what we mean by the limit.
\end{Rem} 
We want to characterize the space $\dot{X}(\Omega_K)$. We introduce, a metric on von Koch's curve by \textcolor{chn}{the} formula
\[
\tilde{d}(x,y):= \|\mathds{1}_{[x,y]}\|_{\dot{X}(\Omega_K)},
\]
where $\mathds{1}_{[x,y]}$ is a characteristic function of an arc on the von Koch's curve which connects $x$ and $y$. It does not matter which one of the two arcs we take because the difference between their characteristic functions is  constant. Further in the proof it will be clear which arc is considered.
Since $\|\cdot\|_{\dot{X}(\Omega_K)}$ is a norm, $\tilde{d}$ is a metric on the boundary. For a given metric space $(Y,d_Y)$ we define the Arens-Eells space (\cite{Weaver1999}).
\begin{Def} 
Let $(Y,d_Y)$ be a metric space. We call a function $f:Y\rightarrow \Rr$ a molecule if it has finite support and $\sum_{y\in Y} f(y)=0$. Let $x,y\in Y$. We define special type of a molecule - an atom : $m_{xy}=\mathds{1}_{x}-\mathds{1}_{y}$, where $\mathds{1}_a$ is a characteristic of a set $\textcolor{chn}{\{a\}}$. Let $m$ be a molecule, i.e. $m= \sum_{j=1}^M a_j m_{x_j y_j}$, then the Arens-Eells norm of $m$ is 
\[
\|m\|_{AE(d_Y)}=\inf\left\{\sum_{j} |a_j|d_Y(x_j,y_j): m:= \sum_{j} a_j m_{x_jy_j}\right\},
\]
where the infimum is taken over all possible representations of $m$ as a sum of $m_{pq}$. The Aerens-Eells space is the completion of molecules with respect to the norm $\|\cdot\|_{AE}$.
\end{Def}
We want to show that $\dot{X}(\Omega_K)$ is isomorphic to the Arens-Eells space with the metric $\tilde{d}$. We will denote by $M(\tilde{d})$ the linear space of molecules. Clearly it is a non-complete norm space. By the definition it is dense in $AE(\tilde{d})$. We define the candidate for the isomorphism on the a linearly dense subsets of both spaces. We set $\Psi: AE(\tilde{d})\rightarrow\dot{X}(\Omega_K)$ by the formula
\begin{equation}\label{definicjaPSI}
\Psi(m_{xy})= \mathds{1}_{[x,y]}\qquad \forall \; x,y\in\partial \Omega_K.
\end{equation}
\begin{Le}
$\Psi :  AE(\tilde{d})\rightarrow\dot{X}(\Omega_K) $ is an isomorphism between Banach spaces.
\end{Le}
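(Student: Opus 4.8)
The plan is to realize $\Psi$ as the linearization of an isometric embedding and then to establish a two--sided norm estimate, the hard half of which rests on the branch--telescoping structure of $\dot{BV}_{T_K}$. First I would fix a base point $p\in\partial\Omega_K$, orient the curve, and set $j(x)=\mathds{1}_{[p,x]}\in\dot{X}(\Omega_K)$. The cocycle identity $\mathds{1}_{[x,y]}+\mathds{1}_{[y,z]}=\mathds{1}_{[x,z]}$, valid modulo constants and hence genuinely in $\dot X$, shows that $j(x)-j(y)=\pm\mathds{1}_{[x,y]}$ in $\dot X$, whence $\|j(x)-j(y)\|_{\dot X}=\|\mathds{1}_{[x,y]}\|_{\dot X}=\tilde d(x,y)$; thus $j$ is an isometry of $(\partial\Omega_K,\tilde d)$ into $\dot X$. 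By the universal property of the Arens--Eels space (\cite{Weaver1999}) $j$ linearizes to a bounded operator $\Psi\colon AE(\tilde d)\to\dot X(\Omega_K)$ with $\Psi(m_{xy})=\mathds{1}_{[x,y]}$ and $\|\Psi\|=\operatorname{Lip}(j)=1$. Equivalently and more concretely, for any molecule $m=\sum_j a_j m_{x_jy_j}$ the triangle inequality gives $\|\Psi m\|_{\dot X}\le\sum_j|a_j|\,\tilde d(x_j,y_j)$, and taking the infimum over representations yields $\|\Psi m\|_{\dot X}\le\|m\|_{AE(\tilde d)}$, so $\Psi$ is a contraction.

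Next I would record that $\Psi$ has dense range: finite combinations $\sum_j a_j\mathds{1}_{[x_j,y_j]}$ of characteristic functions of rational arcs are dense in $\dot X(\Omega_K)$, which is exactly the content assembled from Lemmas \ref{charprzyb}, \ref{Lipschmono} and \ref{graniceistniejawszedzie}. Consequently it remains only to prove the reverse inequality $\|m\|_{AE(\tilde d)}\le C\|\Psi m\|_{\dot X}$ for molecules $m$; a contraction that is bounded below has closed range, and a closed dense range is everything, so this single estimate upgrades $\Psi$ to an isomorphism.

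For the lower bound I would exploit the explicit norm $\|f\|_{\dot{BV}_{T_K}}=\sum_B|f_B-f_{B\downarrow}|\,3^{-n(B)}$. Given a molecule $m$ with $g=\Psi m=\sum_j a_j\mathds{1}_{[x_j,y_j]}$, choose a near--optimal $f\in L$ with $Tr f=g$ and $\|f\|_{\dot{BV}_{T_K}}\le(1+\varepsilon)\|g\|_{\dot X}$. Summing the increments $f_B-f_{B\downarrow}$ along each branch $\br(z)$ and using that $B\in\br(z)$ exactly when $z\in D_\infty(B)$ gives the telescoping identity $g=\sum_B (f_B-f_{B\downarrow})\,\mathds{1}_{D_\infty(B)}$ in $\dot X$. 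Writing $D_\infty(B)=[u_B,v_B]$ and lifting each cylinder indicator to its atom $m_{u_Bv_B}$ produces the representation $m=\sum_B (f_B-f_{B\downarrow})\,m_{u_Bv_B}$. Since a single cylinder has maximal decomposition $\{B\}$, Lemma \ref{charprzyb} yields $\tilde d(u_B,v_B)\le 3^{-n(B)}$, so this series converges absolutely in $AE(\tilde d)$ with $\sum_B|f_B-f_{B\downarrow}|\,\tilde d(u_B,v_B)\le\|f\|_{\dot{BV}_{T_K}}$, and therefore $\|m\|_{AE(\tilde d)}\le\|f\|_{\dot{BV}_{T_K}}\le(1+\varepsilon)\|g\|_{\dot X}$. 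Letting $\varepsilon\to0$ closes the estimate.

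The main obstacle is the justification that the telescoped series really represents the \emph{same} molecule $m$ in $AE(\tilde d)$, rather than merely mapping to $g$ under $\Psi$. This has two ingredients that I expect to be delicate: first, that $\Psi$ is injective on molecules, i.e.\ the cancellation of jumps (the molecule is the jump measure of the piecewise constant function $g$ along the curve), which identifies the finite truncations of the series with the canonical molecule of the truncated trace; second, a convergence argument passing from these finite truncations to the full series inside $AE(\tilde d)$, controlling the tail by $\sum_{n(B)>N}|f_B-f_{B\downarrow}|\,3^{-n(B)}\to0$. Conceptually this is the statement that $\dot X(\Omega_K)^{*}$ is the space of $\tilde d$--Lipschitz functions, the increment $\psi(u_B)-\psi(v_B)$ over an edge being the natural pairing; verifying that such functionals annihilate $\ker Tr$ — equivalently that the boundary term of the summation by parts over the infinite tree vanishes, so that $\langle\Lambda_\psi,f\rangle=\lim_N\sum_{A\in D_n}f_A(\psi(u_A)-\psi(v_A))$ depends only on $Tr f$ — is the technical heart of the argument.
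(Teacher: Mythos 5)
Your upper bound and density steps match the paper, and your telescoping decomposition of a near-optimal $f$ into subtree indicators is indeed the same mechanism the paper uses (there written as an Abel summation $\psi=\psi_R+\sum_A(\psi_A-\psi_{A\downarrow})\gamma^A$). The genuine gap is exactly where you flag it, and your proposed repair does not close it. Absolute convergence gives you that the lifted series $\sum_B(f_B-f_{B\downarrow})m_{u_Bv_B}$ converges in $AE(\tilde d)$ to \emph{some} element $\mu$ with $\Psi(\mu)=g=\Psi(m)$; to conclude $\mu=m$ you need injectivity of $\Psi$ on general elements of $AE(\tilde d)$, which (together with the dense range) is equivalent to the lower bound you are trying to prove. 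Injectivity on molecules does not help, because $\mu$ is not a molecule: what the tail bound $\sum_{n(B)>N}|f_B-f_{B\downarrow}|3^{-n(B)}\to0$ controls is $\|\mu-m_N\|_{AE(\tilde d)}$, not $\|m-m_N\|_{AE(\tilde d)}$, and bounding the latter by $\|\Psi(m-m_N)\|_{\dot X}$ is precisely the inequality under proof — the "convergence argument" is circular. Your closing remark (pairing with $\tilde d$-Lipschitz functions; vanishing of the boundary term in summation by parts) correctly identifies what would be needed, but it is asserted, not proved, and it is not routine: for $B\in D_N$ one only has $|\psi(u_B)-\psi(v_B)|\lesssim 3^{-N}$ while $\#D_N\simeq 4^N$, so pointwise smallness of $f_B$ along branches is insufficient, and one needs an extra averaging argument over cylinders (using that $D_\infty(C)$ occupies roughly a $4^{N-n(C)}$-fraction of $D_\infty(B)$) to show $\sum_{B\in D_N}f_B\left(\psi(u_B)-\psi(v_B)\right)\to0$ whenever $Tr f=0$.

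The paper avoids this infinite-tree identification altogether, and that is where its real work lies. Lemma \ref{ucieciewnieskon}, proved by a monotone-rearrangement and Kuratowski--Zorn argument, shows that any competitor $f$ can be replaced by $Wf$, constant on every subtree hanging below the finite level $n_0=n_0(g)$, without increasing $\|\cdot\|_{\dot{BV}_{T_K}}$. The minimization then runs over a finite-dimensional, piecewise-linear problem, so a minimizer $\psi$ exists and its Abel decomposition is a \emph{finite} sum over levels $n\le n_0$; the corresponding representation of $m$ by atoms is finite, and identifying it with $m$ reduces to matching finitely many jumps of piecewise constant functions — elementary, with no limit to justify. If you want to keep your route with an arbitrary near-optimal $f$, you must first establish the Lipschitz-pairing statement as a standalone lemma (it is true, but requires the averaging estimate sketched above); otherwise, insert the truncation of Lemma \ref{ucieciewnieskon} before telescoping, at which point your argument becomes the paper's proof.
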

\begin{proof}
By \textcolor{chn}{the} triangle inequality and the definitions of $\tilde{d}(x,y)$ and Arens-Eells space, it follows that $\Psi$ is continuous
\begin{equation}\label{kontrakcjapsi}
\|\Psi(f)\|_{\dot{X}(\Omega_K)}\leq \|f\|_{AE(\tilde{d})}.
\end{equation}
Proving the estimate from below is more involved. In the trace space we have following density result.
\begin{Le}
$\Phi(M(\tilde{d}))$ is dense in $\dot{X}(\Omega_K)$.
\end{Le}
\begin{proof}
From \cite{MR3519964} we know that \textcolor{chn}{the} restrictions of Lipschitz functions on $\Rr^2$ are dense in $W^{1}_{1} (\Omega_K)$. Therefore Lipschitz functions are dense in $\dot{X}(\Omega_K)$. Hence for any $f\in\dot{X}(\Omega_K)$ there exists a sequence of Lipschitz functions $f_n$  such that
\[
\lim_{n\rightarrow \infty}\|f-f_n\|_{\dot{X}(\Omega_K)}=0.
\]
%On the other hand bounded functions are in $AE(\tilde{d})$, since $g$ is in fact "derivation" of $\Psi(g)$, any lipschitz function is in the image of $\Psi$. Therefore $f_n=\Psi(g_n)$. From the definition of Arens-Eells space molecules are dense in $AE(\tilde{d})$, hence there exists a molecule $m_n$ s.t. $\|m_n-g_n\|_{AE(\tilde{d})}\leq \frac{1}{n}$. It from \ref{kontrakcjapsi} follows that
%\[
%\|\Psi(m_n)-f\|_{\dot{X}(\Omega_K)}\leq \|\Psi(m_n)-f_n\|_{\dot{X}(\Omega_K)}+\|%f-f_n\|_{\dot{X}(\Omega_K)}\leq  \|m_n -g_n\|_{AE(\tilde{d})} + \|f-f_n\|%_{\dot{X}(\Omega_K)}.
%\]  
So it is enough to approximate Lipschitz functions with piecewise constant functions. Let $f$ be a Lipschitz function. We define \textcolor{chn}{a} piecewise constant function $g_k= \sum \min\{f(x):x\in [x_j,x_{j+1}]\}\mathds{1}_{[x_j,x_{j+1}]}$, where $x_j$ are rational points of order $k$ i.e. $\exists A\in D_k$ such that $[x_j,x_{j+1}]=D_{\infty}(A)$. We define \textcolor{chn}{a} function \textcolor{chn}{$h$ by the formula}
\[
h_A=\inf\{f(z) - g_k(z): z \in D_{\infty}(A)\}\textcolor{chn}{.}
\]
\textcolor{chn}{The} function $h$ satisfies
\[
Tr \,h = f-g_k.
\]
We will estimate the $\dot{BV}_{T_K}$ norm of \textcolor{chn}{the function} $h$.
The function $f$ is also Lipschitz with respect to the metric $d_K$. Let $K$ be \textcolor{chn}{the} Lipschitz constant of $f$ with respect to $d_K$. 
Observe that due to \textcolor{chn}{the} Lipschitz continuity of the function $f$ there are positive numbers $\{b_i\}_{i=1}^{5}, \{c_i\}_{i=1}^{3}$ such that for every pants shaped polygon $A\in D_n$ and $n\geq k$ we have
\[
\frac{1}{3^n}\sum_{Q\in D_1(A)} |h(A)-h(Q)| = \frac{1}{3^n}\sum_{i=1}^{5}\frac{ b_i}{3^n}\leq K \max_{i} b_i \frac{\# D_1(A)}{9^n} .
\]
Similarly for palace shaped polygon $B$
\[
\frac{1}{3^n}\sum_{Q\in D_1(B)} |h(B)-h(Q)| = \frac{1}{3^n}\sum_{i=1}^{3}\frac{c_i}{3^n}\leq K\max_{i} c_i \frac{\# D_1(B)}{9^n} .
\] 
Let $\rho:=\max\{b_1,\ldots,\, b_5,\,c_1,\, c_2,\,c_3\}$. We can prove inductively that $\# D_j(A)\lesssim 4^{j}$. Let $A\in D_k$ we have \textcolor{chn}{the} following estimate on the variation on the sub-tree $D\!\!\uparrow\!(A)$, starting with $A\downarrow$
\[
\begin{split}
\frac{1}{3^{k-1}}|h_{A_k}-0|+\sum_{i=1}^{\infty} \sum_{Q\in D_i(A)} |h_B-h_{B\downarrow}|\frac{1}{3^{k-1+i}}&\leq K\rho \sum_{j=k}^{\infty}\frac{\# D_{j-k}(A)}{9^j}
\\&\lesssim K\sum_{j=k}^{\infty}\frac{4^{j-k}}{9^j}\lesssim  \frac{K}{9^{k}}\textcolor{chn}{.}
\end{split}
\]
We sum \textcolor{chn}{the} above inequalities over all $A\in D_k$ and we get
\[
\|h\|_{\dot{BV}_{T_K}}\lesssim K\frac{4^k}{3^{2k}}\textcolor{chn}{.}
\]
\textcolor{chn}{The} left hand side tends to zero with $k\rightarrow \infty$. Hence $\Psi(M(\tilde{d}))$ is dense in $\dot{X}(\Omega_K)$. 
\end{proof}

To show that $\Psi$ is an isomorphism we need to prove the estimate from below on the norm of $\Psi(m)$. The next auxiliary lemma reduces our problem to a finite tree.
\begin{Le}\label{ucieciewnieskon}
Let $f\in L$ and $Tr f(z)=c$ for every $z\in [x,y]$. Function $\tilde{f}\in L$ given by the formula
\[
\tilde{f}_A=\left\{\begin{array}{cl} c &\qquad   D_{\infty}(A)\subset[x,y],\\
                        f_A&\qquad  \mbox{ in a opposite case},\end{array}\right.
\]
satisfies
\[
\|\tilde{f}\|_{\dot{BV}_{T_K}}\leq \|f\|_{\dot{BV}_{T_K}}.
\]
%then 
%\begin{equation}\label{SLPP}
%\begin{split}
% 3^{-n}|c- f_A| &=   |c- f_A| 3^{-n} + \sum_{Q\in D_1(B)} |c-c| 3^{-n-1}
%\\&\leq  |f_B- f_A| 3^{-n} + \sum_{Q\in D_1(B)} 3^{-n-1}|f_B-c|.
%\end{split}
%\end{equation}
%Therefore in such a case if we exchange $f_B$ with $c$ the norm will not %increase. 
\end{Le}
\begin{proof}
%We prove it inductively with respect to size of $n$. Every vertex $B\in V_T$  %has $\operatorname{deg}(B)\geq 3$, hence for $n\geq 2$ from inductive step
%\[
%\begin{split}
%\sum_{i=1}^{n-1}\sum_{Q\in D_i(B)}\frac{1}{3^{i}}|f_Q-f_{Q\downarrow}|&\geq%\sum_{Q\in D_1(B)}\frac{1}{3} \min_{T\in D_{n-2}(Q)}|f_B-f_T|
%\\&\geq\min_{T\in D_{n-1}(B)}|f_B-f_T|.
%\end{split}
%\]
%Therefore
%\[
%\begin{split}
%|f_A-f_B|+\sum_{i=1}^{n-1}\sum_{Q\in D_i(B)}\frac{1}{3^i}|f_Q-f_{Q\downarrow}|&\\&\geq |f_A-f_B|+\min\limits_{Q\in D_{n-1}(B)} |f_Q-f_B|
%\\&\geq  \min\limits_{B\in D_{n}(A)}|f_Q-f_A|.
%\end{split}
%\]
Fix $A_0\in V_T$ such that $D_{\infty}(A_0)\subset[x,y]$. Without loss of generality we assume that $f_{A_0}=0$ and $c=1$. If $B$ is a descendant of $A_0$ it follows from the definition that $D_{\infty}(B)\subset[x,y]$. %We want to show that if we change $f_B$ to one the $\dot{BV}$ norm will not increase. 

We can assume that for $B\in D\!\uparrow(A_0)$ the value $f_B$ does not exceed one. Indeed if $B$ is such that $f_{B\downarrow}\leq 1$ and $f_B>1$ then we define an auxiliary function $h$
\[
h_Q=\left\{\begin{array}{cl} 1 &\qquad  Q=B\mbox{ or } Q\in D\!\uparrow(B),\\
                        f_Q&\qquad  \mbox{ in a opposite case},\end{array}\right.
\]
\textcolor{chn}{The} function $h$ has the same trace as $f$ and differs from $f$ only on $D\!\uparrow(B)$. Since
\[
|f_{B\downarrow}-f_B|>|f_{B\downarrow}-1|
\]
and $h$ is constant on $D\!\uparrow(B)$ it follows that
\[
\|h\|_{\dot{BV}_{T_K}}< \|f\|_{\dot{BV}_{T_K}}.
\]
We can assume that $f$ is monotone (non-decreasing) on $D\!\!\uparrow\!\!(A_0)$ with respect to \textcolor{chn}{the} descendancy relation i.e. if $B\in D\!\!\uparrow\!\!(A_0)$ and $C$ is a descendant of $B$ then $f_B\leq f_C$. Indeed suppose that $f_C<f_B<1$ for some $C\in D_1(B)$. Since for functions in $L$ the value of trace $Tr f(x)$ is defined as \textcolor{chn}{the} limit along $\operatorname{br}(x)$, but for $x\in D_{\infty}(A)$ the limit is one. Therefore on every branch $ \operatorname{br}(x)$ such that $x\in D_{\infty}(C)$ there exists a vertex $Q$ such that $f_Q\geq f_B$ and $f_{Q\downarrow} < f_B$. We denote by $\omega(C)$ the set of all such vertices. Let $T(C)$ be a tree with a root $C$ and \textcolor{chn}{the} set of leafs is equal to $\{Q\downarrow: Q\in \omega(C)\}$. We define \textcolor{chn}{an} auxiliary function $p$ by the formula
\[
p_Q=\left\{\begin{array}{cl} f_B &\qquad  Q\in V_{T(C)},\\
                        f_Q&\qquad  \mbox{ in a opposite case},\end{array}\right.
\] 
On the tree $T(C)$ the variation of  $p$ is equal to the weighted sum of differences on leafs. However for every $Q\in\omega(C)$
\[
|p_Q-p_{Q\downarrow}|=|f_Q-f_B|\geq |f_Q-f_{Q\downarrow}|.
\] 
Therefore
\[
\|p\|_{\dot{BV}_{T_K}}< \|f\|_{\dot{BV}_{T_K}}.
\]
We have reduced our problem to the set of functions $Y(f)\subset L$ such that $h\in Y(f)$ iff it is a non-decreasing function on $D\!\!\uparrow\!\!(A_0)$ with respect to \textcolor{chn}{the} descendancy relation, $h_B=f_B$ for every $B\in V_{T_K}\backslash D\!\!\uparrow\!\!(A_0)$ and $Tr\, h(x)=1$ for $x\in D_{\infty}(A_0)$. We introduce a partial order on $Y(f)$. For $h,\; z\in Y(f)$  
\[
h\preceq z\quad\Leftrightarrow\quad \forall A\in V_{T_K}\quad h_A\leq z_A\quad\mbox{ and }\quad\|z\|_{\dot{BV}_{T_K}}\leq \|h\|_{\dot{BV}_{T_K}}.
\] 
If $C\subset Y(f)$ is a chain with respect to the relation $\preceq$ then it has an upper bound in $Y(f)$. Indeed the function $z\in Y(f)$ defined by the formula
\[
z_A= \sup_{u\in C} u_A
\]
is an upper bound. Function $z$ is a supremum of non-decreasing functions hence it is non-decreasing. If every non-decreasing sequence  $b_{\alpha}^k$ is convergent to one as $k\to \infty$ then $\sup_{\alpha} b_{\alpha}^k$ converges to one. Therefore $z$ has the same trace as functions in $Y(f)$. In particular $Tr\, h:=1$ for $x\in D_{\infty}(A_0)$. 
By the definition if $ u\preceq v$ then $u_Q\leq v_Q$ for every $Q\in V_{T_K}$ and the total variation \mbox{$\|v\|_{\dot{BV}_{T_K}}\leq \|u\|_{\dot{BV}_{T_K}}$}. Hence for every $n$ we can choose a sequence $f^k \in Y(f)$ such that 
\[
\lim_{k\rightarrow \infty}\|f^k\|_{\dot{BV}_{T_K}}=\inf_{u\in C}\|u\|_{\dot{BV}_{T_K}}.
\]
and $\lim f^k_Q = z_Q$ for every $Q\in \bigcup_{j=1}^{n} D_j $. Therefore \textcolor{chn}{the} following estimate is satisfied
\[
\sum_{j=1}^{n}\sum_{Q\in D_j} \frac{1}{3^j} |z_Q-z_{Q\downarrow}|\leq \inf_{u\in C}\|u\|_{\dot{BV}_{T_K}}
\]
Taking \textcolor{chn}{the} limit with $n\rightarrow\infty$ we get
\[
\|z\|_{\dot{BV}_{T_K}}< \inf_{u\in C}\|u\|_{\dot{BV}_{T_K}}.
\]
Since every chain in $Y(f)$ has an upper bound in $Y(f)$ by the Kuratowski-Zorn Lemma, there exists \textcolor{chn}{an} element of $Y(f)$ maximal with respect to $\preceq$. Let $w\in Y(f)$ be \textcolor{chn}{the} maximal element. By the monotonicity of $w$, it follows that $w_{Q\downarrow}\leq w_{Q}$ for every $Q\in D\!\!\uparrow\!\!(A_0)$. Since for every $Q\in V_{T_K}$ the set of direct descendants $D_1(Q)$ has at least three elements,
\[
\begin{split}
|w_{Q\downarrow}-w_{Q}|+ \sum_{B\in D_1(Q)} \frac{1}{3} | w_B-w_Q|&=w_{Q}-w_{Q\downarrow} + \sum_{B\in D_1(Q)} \frac{1}{3}  w_B-w_Q 
\\&=(1-\frac{\# D_1(Q)}{3})w_Q-w_{Q\downarrow}   +\sum_{B\in D_1(Q)} \frac{1}{3}  w_B
\\&\geq (1-\frac{\# D_1(Q)}{3})\min_{B\in D_1(Q)}(w_B)
\\&\quad-w_{Q\downarrow}   +\sum_{B\in D_1(Q)} \frac{1}{3}w_B.
\end{split}  
\]
\textcolor{chn}{The} function $w$ is maximal with respect to $\preceq$, hence $w_Q=\min_{B\in D_1(Q)} w_B$ for every $Q\in D\!\!\uparrow\!\!(A_0)$. Therefore there is an infinite branch $\operatorname{br}(x)$ such that $x\in D_{\infty}(A_0)$ and $w$ is constant on $\operatorname{br}(x)\cap D\!\!\uparrow\!\!(A_0)$. However for $x\in D_{\infty}(A_0)$ the limit over any branch $\operatorname{br}(x)$ is equal to one. Hence $h_B=1$ for every $B\in D\!\!\uparrow\!\!(A_0)$. We have proven that changing the values of $f$ to one on the descendants of $A_0$ does not increase the total variation. It remains to consider the value at the point $A_0$. By the triangle inequality and the fact that for every vertex $Q$, $\# D_1(Q)\geq 3$ we have
\[
|f_{A_0\downarrow}-f_{A_0}|+\sum_{B\in D_1(A)} \frac{1}{3} | 1-f_{A_0\downarrow}|\geq |f_{A_0\downarrow} -1|.
\]   
Therefore changing the value of $f$ on $A_0$ and its descendants to one, will not increase the total variation. Since \textcolor{chn}{the} only assumption on $A_0$ was that $D_{\infty}(A_0)\subset [x,y]$ we have desired estimate
\[
\|\tilde{f}\|_{\dot{BV}_{T_K}}\leq \|f\|_{\dot{BV}_{T_K}}.
\]
\end{proof}

\begin{Le}
Let $A_0\in D_n$ and $[x,y]=D_{\infty}(A_0)$ then 
\begin{equation}\label{chichi}
\tilde{d}(x,y)= 3^{-n}.
\end{equation}
\end{Le}
\begin{proof}
For any $f\in\dot{BV}_{T_K}$ such that $Trf= \mathds{1}_{[x,y]}$ we define
\[
\tilde{f}_A=\left\{\begin{array}{cl} 1 &\qquad   D_{\infty}(A)\subset[x,y],\\
                        f_A&\qquad  A\in D_{k}, k\leq n,\\
                        0&\mbox{ in a opposite case.}\end{array}\right.
\] 
From the Lemma \ref{ucieciewnieskon} it follows that 
\[
\|\tilde{f}\|_{\dot{BV}_{T_K}}\leq \|f\|_{\dot{BV}_{T_K}}\textcolor{chn}{.}
\]
However
\[
\|\tilde{f}\|_{\dot{BV}_{T_K}}\geq \frac{1}{3^{n}} \sum_{B \in D_1(A_0\downarrow)} |f_{A_0\downarrow}-f_B|\geq \frac{1}{3^{n}}\left( |f_{A_0\downarrow} - 1 |+|f_{A_0}|\right)\geq \frac{1}{3^n}.
\]
The right hand side of the inequality is \textcolor{chn}{the} total variation of a function $p$ \textcolor{chn}{given by the formula} 
\[
p_A=\left\{\begin{array}{cl} 1 &\qquad   D_{\infty}(A)\subset[x,y],\\
                                      0&\mbox{ in a opposite case.}\end{array}\right. 
\]
\end{proof}
Let us observe that the set of functions $\sum_j a_j\mathds{1}_{[x_j,y_j]}$, where $x_j,y_j$ are rational, is dense in $\dot{X}(\Omega_K)$. Indeed for every irrational arc $[x,y]$ there exists a sequence of points $t_n,z_n$ such that
\[
\|\mathds{1}_{[x,y]} - \mathds{1}_{[t_n,z_n]}\|_{\dot{X}(\Omega_K)}\lesssim \frac{1}{3^n}.
\]
Similarly we observe that  molecules $\sum_{j} a_j m_{x_j y_j}$, where $x_j,y_j$ are rational, are dense in Arens-Eells space. \\
We fix $g=\sum_j a_j\mathds{1}_{[x_j,y_j]}$, where arcs $[x_j,y_j]$ are rational and pairwise disjoint. Let \mbox{$f\in L$} be any function such that $Tr f= g$. There exists $n_0=n_0(g)$ such that for $A\in D_{n_0}$ either there exists an arc $[x_j,y_j]$ such that $D_{\infty}(A)\subset [x_j,y_j]$ or $D_{\infty}(A)$ and $\bigcup [x_k,y_k]$ are disjoint. 
We define \textcolor{chn}{the} function $Wf\in L$ by
\[
Wf_A=\left\{\begin{array}{cl} a_j &\qquad D_{\infty}(A)\subset [x_j,y_j]\\
0 &\qquad D_{\infty}(A)\cap \bigcup_{j} [x_j,y_j]=\emptyset,\\
f_A &\qquad\mbox{in other cases.}\end{array}\right.
\]
It is easy to observe that $Trf =TrWf$. Moreover from Lemma \ref{ucieciewnieskon} it follows that
\[
\|Wf\|_{\dot{BV}_{T_K}}\leq \|f\|_{\dot{BV}_{T_K}}. 
\]
%\begin{proof}
%Let $A\in D_{n_0}$  s.t $\exists_j D_{\infty}(A)\subset [x_j,y_j]$ or %$D_{\infty}(A)$ and $\cup [x_j,y_j]$ are disjoint. Without loss of generality, let us assume $D_{\infty}(A)\subset  [x_j,y_j]$. Since $[x_j,y_j]$ are disjoint, from Lemma~\ref{ucieciewnieskon} we get that
%\[
%\begin{split}
%\sum_{B\in D_1(A)}|f_A-f_B| + \sum_{i=1}^{\infty}\sum_{B\in D_i(A)}\sum_{Q\in D_1(B)}|f_Q-f_B|\frac{1}{3^i}&\geq  \operatorname{deg}(A)\lim_{n\rightarrow \infty} \min\limits_{Q\in D_n(A)}|f_A-f_Q|
%\\&= \operatorname{deg}(A)|f_A - a_j|,
%\end{split}
%\] 
%but 
%\[
%\sum_{B\in D_1(A)}|Tf_A-Tf_B| + \sum_{i=1}^{\infty}\sum_{B\in D_i(A)}\sum_{Q%\in D_1(B)}|Tf_Q-Tf_B|\frac{1}{3^i}= \operatorname{deg}(A)|Tf_A-Tf_Q|.
%\]
%Summing above inequalities multiplied by $\frac{1}{3^n}$ and adding variation over first $n_0$ generations gives us 
%\[
%\|f\|_{\dot{BV}_{T_K}}\geq \|Tf\|_{\dot{BV}_{T_K}}.
%\]
%\end{proof}
Therefore
\[
\inf\{\|f\|_{\dot{BV}_{T_K}}:\; Tr f=g\}=\inf\{\|f\|_{\dot{BV}_{T_K}}:\; Tr f=g \mbox{ and } f=Wf\}.
\]
Since we minimize the total variation over the set $\{ Tr f=g \mbox{ and } f=Wf\}$, the values $f_A$ are fixed for $A\in D_k$, $k>n_0$. Therefore the total variation on this set is a function of finitely many variables. Moreover it is a piecewise linear function with finitely many pieces. Therefore the minimum is attained. We denote the total variation minimizer by $\psi$. We define by $\gamma^A\in\dot{BV}_{T_K}$ 
\[
\gamma^A_B=\left\{\begin{array}{cr} 1& B\in D\uparrow(A),
\\ 0&\quad \mbox{ in other cases}.\end{array}\right.
\]
Therefore \textcolor{chn}{by Abel's} summation formula
\[
\psi= \psi_R +\sum_{j=1}^{n_0} \sum_{A\in D_n} \left(\psi_{A} -\psi_{A\downarrow}\right) \gamma^A. 
\]
A simple calculation gives us 
\begin{equation}\label{kartaamberu}
\|\psi\|_{\dot{BV}_{T_K}}= \sum_{j=1}^{n_0} \sum_{A\in D_n} |\psi_{A} -\psi_{A\downarrow}|\;\| \gamma^A\|_{\dot{BV}_{T_K}}.
\end{equation}
 The function $\|\psi\|_{\dot{BV}_{T_K}}$ minimize the variation for a given trace, hence 
\[
\|Tr{f}\|_{\dot{X}(\Omega_K)}=\|\psi\|_{\dot{BV}_{T_K}}.
\]
Therefore from $\eqref{kartaamberu}$, $\eqref{chichi}$
\[
\begin{split}
\|Tr\,\psi\|_{\dot{X}(\Omega_K)}&\simeq \sum_{j=1}^{n_0} \sum_{A\in D_n} |\psi_{A} -\psi_{A\downarrow}|\;d(x(A),y(A))
\\&\geq \|\sum_{j} a_j m_{x_j y_j}\|_{AE(\tilde{d})}\textcolor{chn}{.}
\end{split}
\]

Therefore $\Psi$ is an isomorphism of Banach spaces.
\
\end{proof}
We have proven that the trace space is isomorphic to the Arens-Eells space.

 We will characterize $AE(\tilde{d})$ further.
\begin{Le}
$AE(\tilde{d})$ is isomorphic to $\ell^1$
\end{Le}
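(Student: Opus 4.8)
The plan is to exhibit inside $AE(\tilde d)$ a normalized unconditional basis equivalent to the unit vector basis of $\ell^1$; by the standard characterization (a space with a normalized unconditional basis $(e_j)$ satisfying $\|\sum a_j e_j\|\simeq\sum|a_j|$ is isomorphic to $\ell^1$) this gives the claim. I would argue on the isomorphic copy $\dot{X}(\Omega_K)$, using that $Tr:\dot{BV}_{T_K}\to\dot{X}(\Omega_K)$ is a metric quotient map. First recall that $\dot{BV}_{T_K}$ is itself isometric to $\ell^1$: the subtree indicators satisfy $\|\sum_A c_A\gamma^A\|_{\dot{BV}_{T_K}}=\sum_A|c_A|\,3^{-n(A)}$, so after normalization $\{\gamma^A\}$ is an $\ell^1$-basis. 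The traces $Tr\,\gamma^A=\mathds{1}_{D_\infty(A)}$ are linearly dependent (parent equals the sum of its children), so this basis cannot be transported directly and independent representatives must be chosen on the quotient.

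The candidates are Haar functions adapted to the tree: for each vertex $A$ I take the $(\#D_1(A)-1)$-dimensional space of functions that are constant on each cylinder $D_\infty(B)$, $B\in D_1(A)$, and have mean zero across the children, fixing a basis $\{h_{A,i}\}$ of it. Since finite combinations $\sum_j a_j\mathds{1}_{[x_j,y_j]}$ of rational arcs are dense in $\dot{X}(\Omega_K)$ and each is constant on cylinders of some finite depth, the system $\{h_{A,i}\}$ spans a dense subspace. A computation as in \eqref{chichi} gives $\|h_{A,i}\|_{\dot{X}(\Omega_K)}\simeq 3^{-n(A)}$, and $h_{A,i}$ admits a canonical lift, constant on each child subtree with value $0$ at $A$ and outside $D\!\!\uparrow\!(A)$, whose $\dot{BV}_{T_K}$-variation is carried solely by the edges joining $A$ to its children. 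Since every edge has a unique upper endpoint, these supporting edge-sets are pairwise disjoint for distinct $A$ and consist of at most $N$ edges for fixed $A$.

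The heart of the matter, and the step I expect to be the main obstacle, is the lower bound $\|\sum_{A,i}a_{A,i}h_{A,i}\|_{\dot{X}(\Omega_K)}\gtrsim\sum_{A,i}|a_{A,i}|\,3^{-n(A)}$ (the upper bound follows from the triangle inequality and the edge-disjointness of the canonical lifts). I would prove it as follows. Given any lift $f\in\dot{BV}_{T_K}$ of $g=\sum_{A,i}a_{A,i}h_{A,i}$, apply the cutting Lemma \ref{ucieciewnieskon} repeatedly to replace $f$, without increasing its norm, by a lift that is constant on every subtree on which $g$ is constant; this collapses the infinite tree to the finitely many levels that actually separate the supports of the $h_{A,i}$. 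On the resulting coarse lift the variation splits, by the partition of edges according to their upper endpoint, into a sum over $A$ of the variation carried by the child-edges of $A$; at each $A$ this is the variation of a mean-zero function on the $\le N$ children of $A$, which is comparable to $\sum_i|a_{A,i}|\,3^{-n(A)}$ with a constant depending only on $N$, hence uniform. Summing the edge-disjoint contributions over $A$ yields the lower bound.

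Consequently the normalized Haar system is equivalent to the unit vector basis of $\ell^1$ and spans a dense subspace, so $AE(\tilde d)\cong\dot{X}(\Omega_K)\cong\ell^1$. I would close by remarking that this identification is exactly what drives the main theorem: $Tr:\dot{BV}_{T_K}\to\dot{X}(\Omega_K)\cong\ell^1$ is then a quotient map onto $\ell^1$, which by the lifting property of $\ell^1$ admits a bounded linear right inverse.
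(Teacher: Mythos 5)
Your strategy --- building an explicit Haar-type system in $\dot{X}(\Omega_K)$ equivalent to the unit vector basis of $\ell^1$ --- is a genuinely different route from the paper's: the paper never constructs a basis, but instead introduces a Euclidean-type metric $d$ on the curve (via the torus indexing of Koch segments), proves $\tilde d\simeq d^{\log_4 3}$, and then invokes the Ciesielski--Weaver theorem that the Arens--Eells space of a snowflaked compact subset of $\Rr^N$ is isomorphic to $\ell^1$. Your density step and the upper bound are fine. But the step you yourself single out as the heart of the matter --- the lower bound --- has a genuine gap, and it is not repairable by the mechanism you propose.

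After collapsing a lift $f$ of $g=\sum_{A,i}a_{A,i}h_{A,i}$ by Lemma \ref{ucieciewnieskon}, the values of $f$ at the surviving vertices are still free variables; the child-edge variation at $A$, namely $\sum_{B\in D_1(A)}3^{-n(A)-1}|f_B-f_A|$, involves these free values and not the averages of $g$ over the cylinders $D_\infty(B)$, so it need not control $\sum_i|a_{A,i}|3^{-n(A)}$. Concretely, at a ``palace'' vertex the parent-edge weight equals the total child-edge weight ($3^{-(n+1)}=3\cdot 3^{-(n+2)}$), so the variation functional is affine along the direction that slides a child's value toward its parent's value; consequently there exist minimizing lifts in which an entire edge-group carries zero variation although the Haar coefficient at that vertex is nonzero --- the cost migrates to deeper edge-groups, and your group-by-group accounting collapses. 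More decisively, your argument uses nothing about the Koch tree beyond $3\le\#D_1(Q)\le N$ and the weights $3^{-n}$, so it would apply verbatim to the regular ternary tree with the same weights; there the trace space is $L^1$ of the boundary Cantor set, and the Haar system in $L^1$ is \emph{not} equivalent to the $\ell^1$-basis (Rademacher-type sums of $3^n$ normalized Haar functions per level have trace norm $\simeq\sqrt N$ against $\sum|a_{A,i}|3^{-n(A)}\simeq N$ --- the same Khintchine fact the paper uses to prove Peetre's theorem). So any correct proof must use the gap between the effective branching rate of the Koch tree (a depth-$m$ cylinder has boundary measure $\simeq 4^{-m}$) and the weight ratio $3$: for instance a telescoping estimate along branches of the form $|a_{A,i}|\lesssim\sum_{m>n(A)}4^{-(m-n(A))}V_m(A)$, where $V_m(A)$ is the depth-$m$ variation below $A$, which sums correctly over $A$ only because $\sum_{k<m}3^{-k}4^{k-m}\simeq 3^{-m}$. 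That dimensional input ($\log_4 3<1$) is precisely what the paper's route supplies through the snowflake equivalence and the cited Ciesielski--Weaver theorem, and it is absent from your argument.
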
  
\begin{proof}
In order to characterize $AE(\tilde{d})$ we introduce another metric on the von Koch's curve. The von Koch's curve is constructed inductively. The induction starts with a triangle and every segment of the triangle is replaced with a piecewise linear curve $w$. This curve is made of from 4 segments. In the next step every old segment is replaced with a rescaled copy of $w$. Every segment is indexed in the following way. The segment $S_x$ is replaced with segments $S_{x,0},S_{x,1},S_{x,2},S_{x,3}$.\\
\begin{center}
\begin{tikzpicture}
\draw [black]  (0,0) -- (5,0) node[draw=none,fill=none,font=\scriptsize,midway,below] {x};
\draw [black]  (7,0) -- (9,0)node[draw=none,fill=none,font=\scriptsize,midway,below] {x,1} -- (10,1.71)node[draw=none,fill=none,font=\scriptsize,midway,left] {x,2} -- (11,0)node[draw=none,fill=none,font=\scriptsize,midway,right] {x,3}--(13,0)node[draw=none,fill=none,font=\scriptsize,midway,below] {x,4};
\draw[-stealth] (5.5,0)--(6.5,0);
\end{tikzpicture}
\end{center}
$I=\{x=(x_1,x_2\ldots): x_1\in\{0,1,2\}, x_i\in\{0,1,2,3\} \mbox{ for } i>1\}$
is a set of all infinite indices of segments in the von Koch's curve construction. For every point $x\in\partial\Omega_K$ there is a corresponding index $i(x)\in I$ such that segments $S_{i(x)_1,...,i(x)_k}\to x$ as $k\rightarrow\infty$. We define a bijection between set of indices and a one dimensional Torus with the euclidean metric
\[
\Tt=\{y: y=\frac{i(x)_1}{3}+\sum_{j=2}^{\infty}\frac{i(x)_j}{4^j}\,\quad i(x)\in I\}.
\]
Every $x\in\partial\Omega_K$ has a unique index in $\Tt$. Abusing notation we denote it by $i(x)$. We can define a metric on $\partial\Omega_K$ by
\[
d(x,y):=d_{\Tt}(i(x),i(y)).
\]
As is easily on a Figure \ref{KOchsnow}, if $A\in D_n$ is a "pants" shaped polygon then $D_{\infty}(A)=[x,y]$, where $d(i(y),i(x))=\frac{2}{4^n}$. It is so because its descendants cover two segments of $n$-th generation. Similarly if $A$ is a "palace" shaped polygon, $d(i(y),i(x))=\frac{1}{4^n}$. In any of the above cases we have
\[
\tilde{d}(x,y)\simeq \frac{1}{3^n}=\frac{1}{4^{n\log_4(3)}}\simeq d(x,y)^{log_4(3)}.
\] 
For rational points $x,y$ we define
\[
f^{[x,y]}_A:=\left\{\begin{array}{cr} 1&\quad D_{\infty}(A)\subset [x,y],\,\\
							0&\quad\mbox{ otherwise.	} \end{array}\right.
\] 
Obviously $Trf^{[x,y]}:=\mathds{1}_{[x,y]}$. Since $x,y$ are rational, there exists unique finite sequence of $\{A_k\}_{k\in I}\subset V_T$, such that $f^{[x,y]}= \sum_k \gamma^{A_k}$. Let $m=\min\{n: \exists\; k\; A_k\in D_n\}$. From the definition of $f^{[x,y]}$ we deduce that $\gamma^{A_k}$ have disjoint \textcolor{chn}{supports}, and for every $n$ there are at most $10$ polygons in $\{A_k\}_{k\in I}\cap D_n$. Therefore
\[
d(x,y)= \sum_{k} d(x(A_k),y(A_k))\leq 10 \sum_{i=m} \frac{1}{4^i}\simeq \frac{1}{4^m}.
\] 
 and we have \textcolor{chn}{an} analogous estimate for $\tilde{d}$. Hence 
\[
\tilde{d}(x,y)\simeq\frac{1}{3^m}=\frac{1}{4^{m\log_4(3)}}\simeq d(x,y)^{\log_4(3)}.
\] 
Therefore $AE(\tilde{d})\cong AE(d^{\log_4(3)})$. Since $0<\log_4(3)<1$ the claim of the lemma follows from the theorem below,
\begin{Th}
Let $N\in\Nn$ and $X$ is isometric to \textcolor{chn}{an} infinite compact subset of $\Rr^N$. If $d$, $\tilde{d}$ are metrics on $X$ s.t $\tilde{d}\simeq d^{\alpha}$ for $0<\alpha<1$ then the space $AE(\tilde{d})$ is isomorphic to $\ell^1$. 
\end{Th} 
The case N=1 was proven by Z. Ciesielski \cite{MR0132389} and for $N>1$ the above Theorem follows from Theorem 3.5.5 and Theorem 3.3.3 in \cite{Weaver1999}.
\end{proof}
Therefore $\dot{X}(\Omega_K)$ is isomorphic to $\ell^1$. Let $\dot{X}(\Omega_K)=\operatorname{span}\{e_i\}$. From the definition of the trace space for every $e_i$ there exists $f_i\in \dot{BV}_{T_K}$ such that $\|f_i\|_{\dot{BV}_{T_K}} \leq 2\|e_i\|_{\dot{X}(\Omega_K)}$ and $Tr f_i=e_i$. Hence the $S$ given by the formula
\[
S\left(\sum_{i} a_i e_i\right)= \sum_{i} a_i f_i
\]
is the desired right inverse operator with $\|S\|\leq 2$. Indeed 
\[
Tr\left(S\left(\sum_i a_i e_i\right)\right)= Tr \left(\sum_{i} a_i f_i\right)=\sum_{i} a_i e_i.  
\] 
This concludes the proof of Theorem \ref{twierdzenieKocha}.
\newpage
\section*{Appendix}
In this section we show the steps of the construction of the Whitney Covering of the von Koch's snowflake used in our proof. We divide the von Koch's snowflake into six identical parts. We focus our attention on one of them.
 \begin{figure}[h]
\begin{center}
\includegraphics[width=10em]{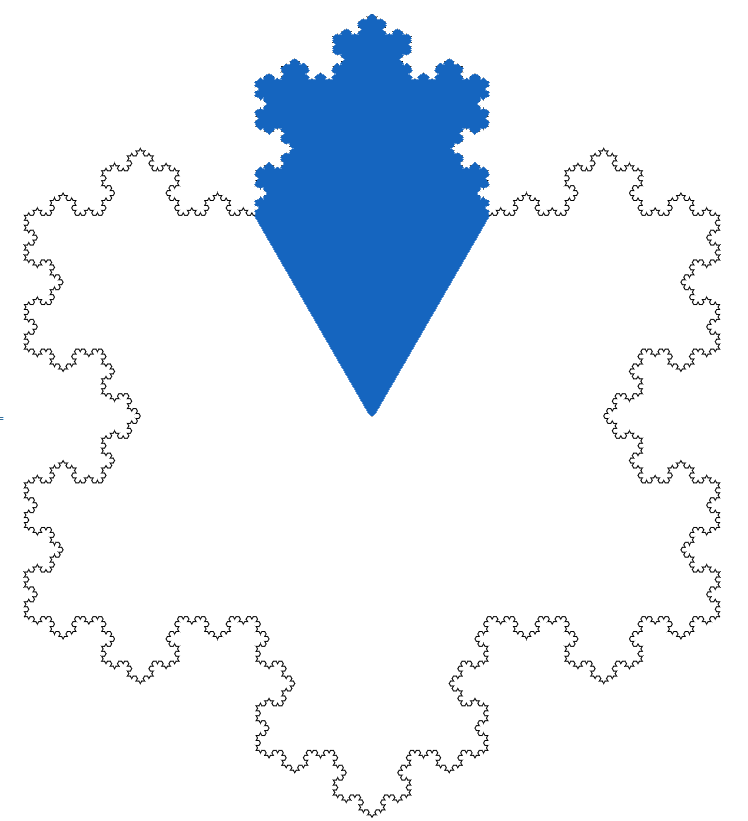}
\caption{One of the six parts of the von Koch's snowflake.}\label{KON0}
\end{center}
\end{figure}
We start our construction with the third step of the iterative construction of von Koch's snowflake. In most steps of the construction we limit the description of it to showing pictures. In our construction all the lines we use are parallel to the sides of the equilateral triangle (\textcolor{chn}{the} starting point for von Koch's snowflake construction).  We denote the vectors pointing in those three directions respectively by $\nu_1,\nu_2,\nu_3$ and by $L(X,v)$ we denote \textcolor{chn}{the} line parrallel to a vector $v$, which contains point $X$. In the first step we construct vertices of the first generation of polygons - "pants" polygon.
 \begin{figure}[h]
\begin{center}
\includegraphics[width=14em]{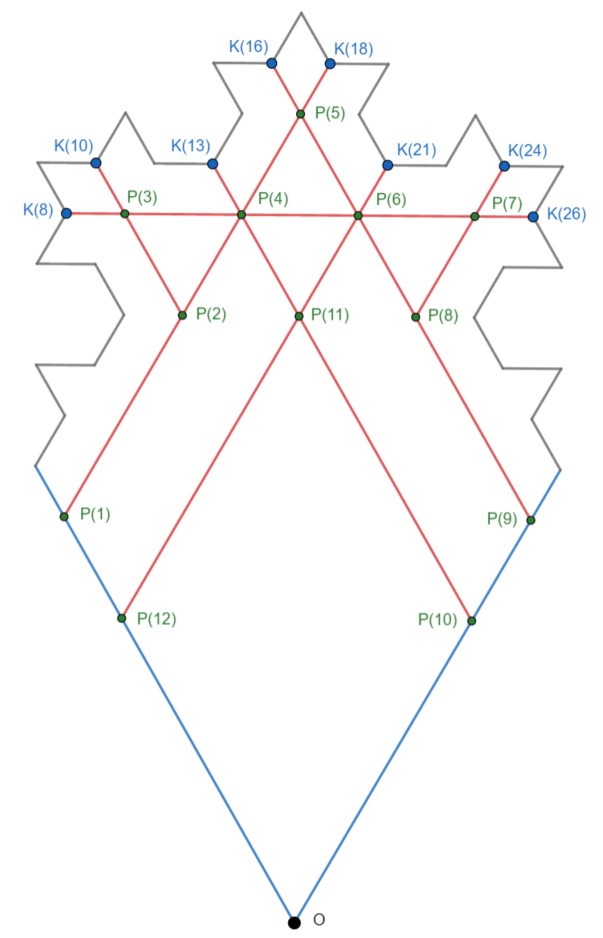}
\includegraphics[width=14em]{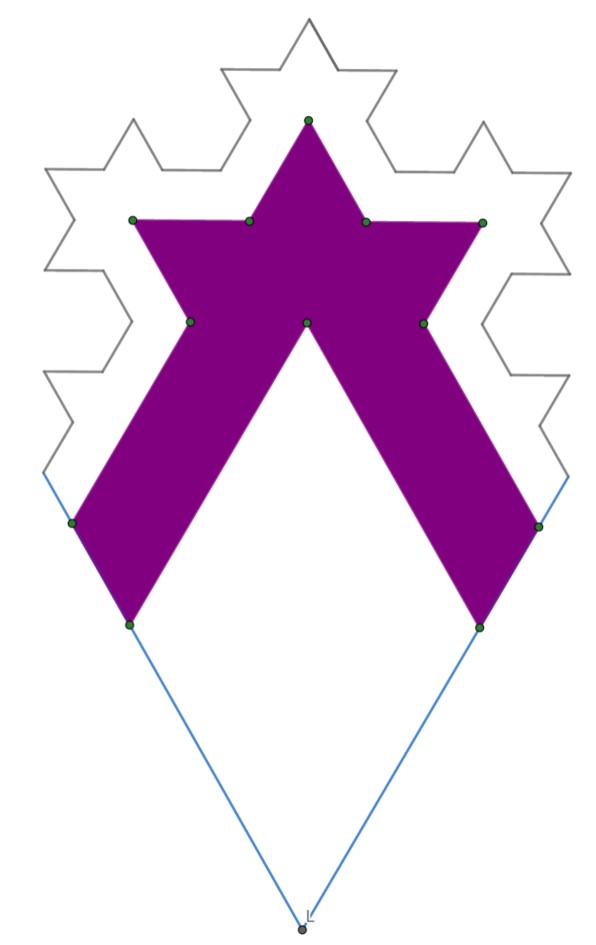}
\caption{First step of the inductive construction of the Whitney Covering.}\label{KON1}
\end{center}
\end{figure}\newpage
The green points are intersections of following lines:
\[
\begin{array}{cc} P(1)=L(O,\nu_1)\cap L(K(18),\nu_3);\quad& P(2)=L(K(10),\nu_1)\cap L(K(18),\nu_3);\\P(3)=L(K(10),\nu_1)\cap L(K(8),\nu_2);\quad&P(4)=L(K(13),\nu_1)\cap L(K(8),\nu_2);\\P(5)=L(K(16),\nu_1)\cap L(K(18),\nu_3);\quad&P(6)=L(K(16),\nu_1)\cap L(K(21),\nu_3);\\
 P(7)=L(K(26),\nu_2)\cap L(K(24),\nu_3);\quad& P(8)=L(K(16),\nu_1)\cap L(K(24),\nu_3);\\P(9)=L(K(16),\nu_1)\cap L(O,\nu_3);\quad&P(10)=L(K(13),\nu_1)\cap L(O,\nu_3);\\P(11)=L(K(13),\nu_1)\cap L(K(21),\nu_3);\quad&P(12)=L(O,\nu_1)\cap L(K(21),\nu_3);\\
\end{array}
\]
Now we take next generation of the approximation of von Koch's snowflake. We observe that we can cover the boundary using few blue and lime regions.
 \begin{figure}[h]
\begin{center}
\includegraphics[width=13em]{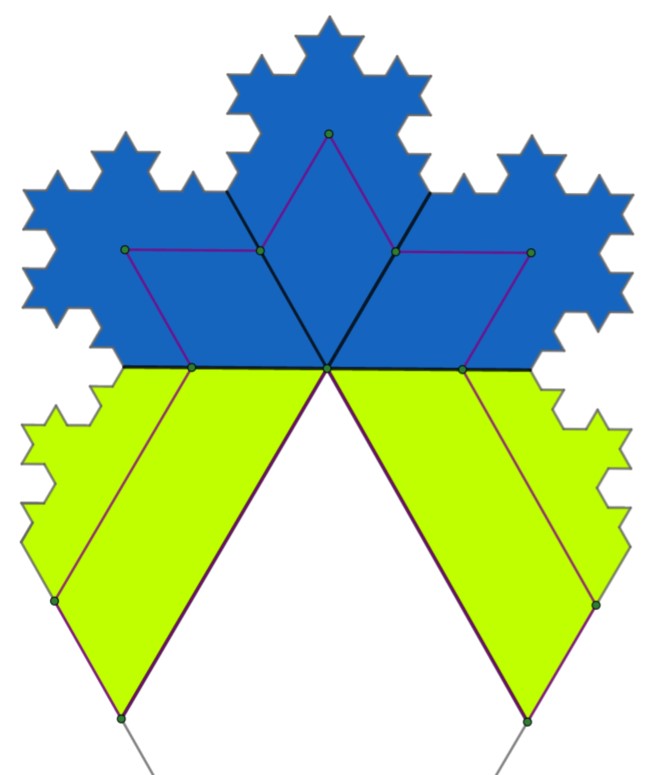}
\caption{Five regions of the second step of the construction.}\label{KON3}
\end{center}
\end{figure} \\ 
In blue regions we repeat the first step of the construction. In the lime part we do the following (Figure \ref{KON4}).
\[
\begin{array}{cc} R(1)=L(K(10),\nu_1)\cap L(K(8),\nu_2);\quad& R(2)=L(K(13),\nu_1)\cap L(K(8),\nu_2);\\R(3)=L(K(10),\nu_1)\cap L(R(2),\nu_3);\quad&R(4)=L(K(17),\nu_2)\cap L(R(2),\nu_3);\\R(5)=L(K(1),\nu_1)\cap L(R(2),\nu_3);
 \end{array}
 \]
 and $R(6)=P(2)$, $R(7)=P(1)$, where $P(1),P(2)$ are points from the previous step of construction. In the end we have constructed five new polygons. What is important vertices on the boundary of given region coincide with the corresponding vertices from the neighboring regions (Figure \ref{KON6}).\newpage
 \begin{figure}[h]
\begin{center}
\includegraphics[width=15.5em]{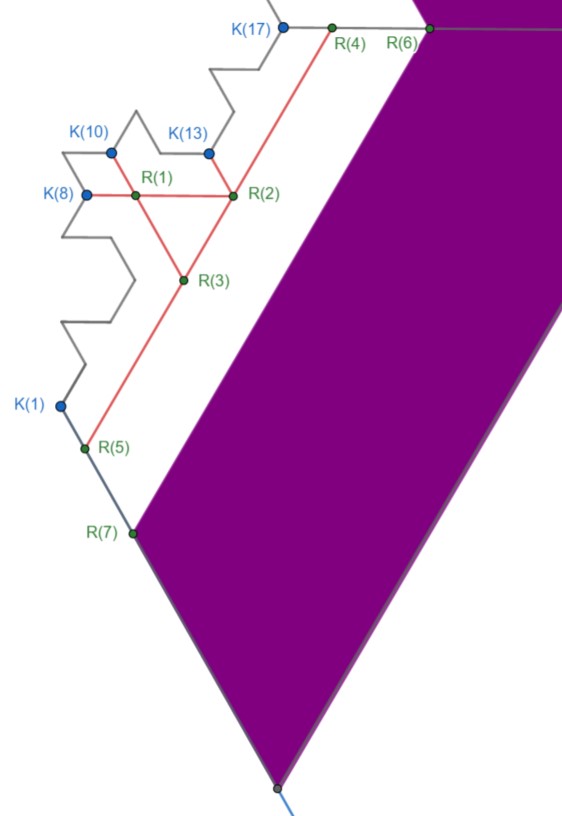}
\includegraphics[width=14.5em]{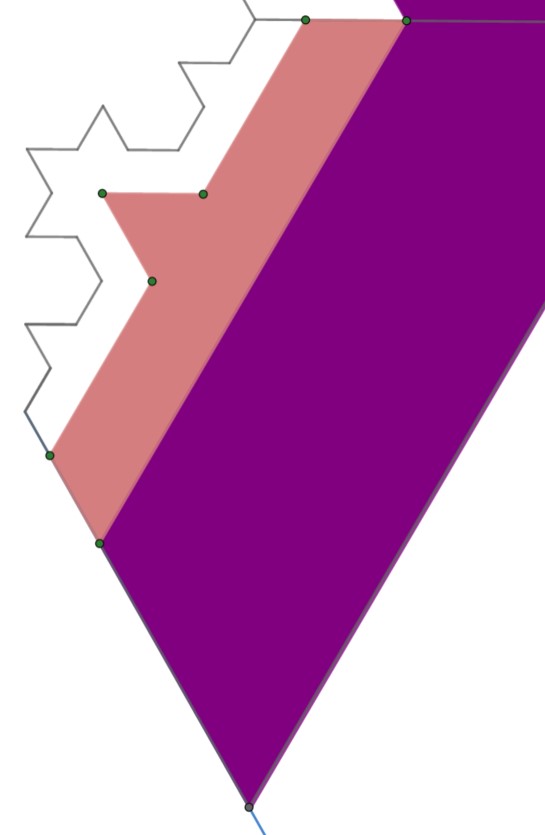}
\caption{Construction step for the yellow region.}\label{KON4}
\end{center}
\end{figure}
\begin{figure}[h]
\begin{center}
\includegraphics[width=11.5em]{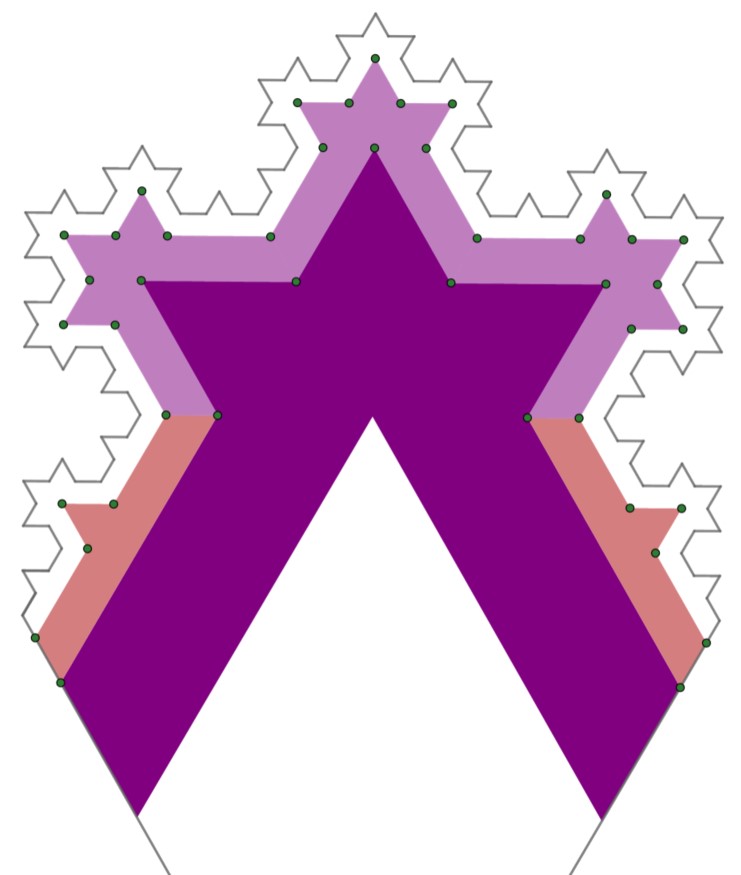}
\caption{Second generation of the polygons.}\label{KON6}
\end{center}
\end{figure}
\newpage
We take the next iterative step of the approximation of von Koch's snowflake and we observe that again we can cover the neighborhood of the boundary by the lime and blue regions.
\begin{figure}[h]
\begin{center}
\includegraphics[width=14em]{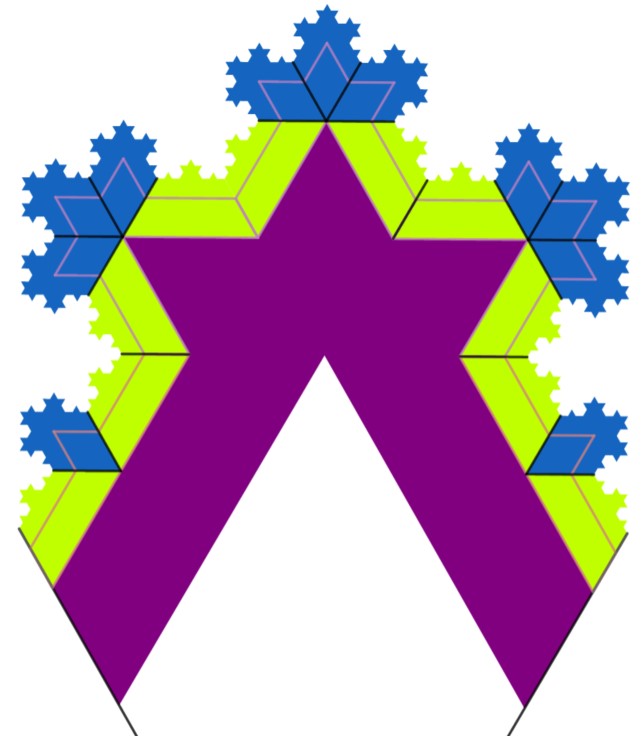}
\caption{Blue and yellow regions for the third step of construction.}\label{KON7}
\end{center}
\end{figure}\\
In every region we repeat the construction according to the color of the region. Let us observe that in the next generation every lime region \textcolor{chn}{has} 3 subregions (lime,blue,lime) and every blue region \textcolor{chn}{has} 5 subregions (lime,blue,blue,blue,lime). Since the vertices of neighboring polygons coincide we can repeat the construction inductively.\\
Let $K_n$ be $n$-th approximation of von Koch's snowflake. By $G_n$ we denote the set covered by polygons from $n$-th generation of the construction, where $G_0$ is just \textcolor{chn}{the} six pointed star in middle of von Koch's snowflake.\\ The polygons on the $n$-th step of the construction almost cover the set $K_{n+2}$ (except a narrow strip next to the boundary). Observe that we always perform the same construction on lime and blue regions. However the regions on n-th step are \textcolor{chn}{the} scaled copies of regions from the second step with a scale $\frac{1}{3^{n-2}}$ for $n\geq 2$. Therefore there exists a constant $C>0$ such that 
\[
\{x\in K_{n+2} :  \operatorname{dist}(x,\partial K_{n+2}) <\frac{C}{3^n}\} \subset \bigcup_{j=0}^{n} G_n.
\]
Since for $k\in \mathbb{N}$ we have $K_n\subset K_{n+k}$ and for any $x\in K_n$ the sequence $\operatorname{dist}(x,\partial K_{n+j})$ is non-increasing. We get
\[
K_n\subset\overline{\bigcup_{n=0}^{\infty} G_n}.
\]          
Therefore 
\[
\overline{\Omega_K}=\overline{\bigcup_{n=0}^{\infty} K_n}=\overline{\bigcup_{n=0}^{\infty} G_n},
\]
where $\Omega_K$ is von Koch's snowflake. Obviously
\[
\bigcup_{n=0}^{\infty} G_n\subset \Omega_K.
\]
Hence the family of polygons we constructed covers von Koch's snowflake. Other properties of \textcolor{chn}{the} Whitney covering follow easily from the construction.\\

\textbf{Acknowledgments} We would like to thank Anna Kamont for valuable comments and suggestions. We would like to thank anonymous reviewers for stimulating comments. During the work on the article, a by-product, a handmade fractal carpet, was created. As an artistic object, it was exhibited at the Bridges Linz 2019 conference. Its manufacturing was supported by the Copernicus Science Center in Warsaw. We thank them for their support of our artistic endeavors. \\

 \textbf{Funding} This research was partially supported by the National Science Centre, Poland, and Austrian Science Foundation FWF joint CEUS programme. National Science Centre project no. 2020/02/Y/ST1/00072 and FWF project no. I5231.\\

\bibliographystyle{plain}
\bibliography{bibliografia}
Krystian Kazaniecki\\ \vspace{0.5cm}
krystian.kazaniecki@jku.at\\ 
Michał Wojciechowski\\
miwoj@impan.pl

\end{document}